\def\argmin{\mathop{\rm argmin}}
\numberwithin{equation}{section}
\newtheorem{conjecture} {\sc  Conjecture\rm} [section]
\newtheorem{preremark}[conjecture]{Remark}
\newenvironment{remark}%
  {\begin{preremark}}{\end{preremark}}
\newtheorem{predefinition}[conjecture]{Definition}
\newenvironment{definition}%
  {\begin{predefinition}}{\end{predefinition}}
\newtheorem{prelemma}[conjecture]{Lemma}
\newenvironment{lemma}%
 {\begin{prelemma}}{\end{prelemma}}
\newtheorem{preproposition}[conjecture]{Proposition}
\newenvironment{proposition}%
 {\begin{preproposition}}{\end{preproposition}}
\newtheorem{precorollary}[conjecture]{Corollary}
\newenvironment{corollary}%
  {\begin{precorollary}}{\end{precorollary}}
\newtheorem{pretheorem}[conjecture]{Theorem}
\newenvironment{theorem}%
 {\begin{pretheorem}}{\end{pretheorem}}
\begin{document}

\title{On the $ \Gamma -$convergence of the Allen-Cahn functional with boundary conditions}

\author{{Dimitrios Gazoulis}\thanks{Department of Mathematics and Applied Mathematics, University of Crete, 70013 Heraklion,
Greece}\,\,\thanks{Institute of Applied and Computational Mathematics, Foundation for Research and Technology-Hellas\hspace{10ex}(dgazoulis@math.uoa.gr)}}

\date{}

\maketitle



\begin{abstract}
We study minimizers of the Allen-Cahn system. We consider the $ \varepsilon -$energy functional with Dirichlet values and we establish the $ \Gamma $-limit. The minimizers of the limiting functional are closely related to minimizing partitions of the domain. Finally, utilizing that the triod and the straight line are the only minimal cones in the plane together with regularity results for minimal curves, we determine the precise structure of the minimizers of the limiting functional, and thus the limit of minimizers of the $ \varepsilon $-energy functional as $ \varepsilon \rightarrow 0 $.
\end{abstract}



\section{Introduction}

In this work we are concerned with the study of vector minimizers of the Allen-Cahn $ \varepsilon $-functional,
\begin{equation}\label{EnergyFunctional}
\begin{gathered}
J_{\varepsilon} (u, \Omega) := \int_{\Omega} \left( \frac{\varepsilon}{2} |\nabla u|^2 + \frac{1}{\varepsilon} W(u) \right) dx , \\
u : \Omega \rightarrow \mathbb{R}^m ,
\end{gathered}
\end{equation}
where $ \Omega \subset {\mathbb{R}}^n $ is an open set and $ W $ is a $ N $-well potential with $ N $ global minima.

Let
\begin{equation}\label{MinProblem}
u_{\varepsilon} := \argmin\limits_{v \in W^{1,2}(\Omega ; {\mathbb{R}}^m)} \lbrace J_{\varepsilon}(v, \Omega) : v|_{\partial \Omega} = g_{\varepsilon}|_{\partial \Omega} \rbrace \;,\; \textrm{where} \;\; g_{\varepsilon} \in W^{1,2} (\Omega ; {\mathbb{R}}^m).
\end{equation}
Thus $ u_{\varepsilon} \in W^{1,2}(\Omega ; {\mathbb{R}}^m) $ is a weak solution of the system
\begin{equation}\label{EulerLagrange}
\begin{cases}
\varepsilon \Delta u_{\varepsilon} - \frac{1}{\varepsilon} W_u (u_{\varepsilon}) =0 \;\:,\; \textrm{in} \;\: \Omega, \\ u_\varepsilon = g_\varepsilon \;\:,\; \textrm{on} \;\: \partial \Omega,
\end{cases}
\end{equation}

We study the asymptotic behavior of $ u_\varepsilon $ within the framework of $ \Gamma $-convergence. Moreover, we analyze the relationship between minimizers of the Allen-Cahn system and minimizing partitions subject to Dirichlet boundary conditions. For some particular assumptions on the limiting boundary conditions, we will prove uniqueness for the limiting geometric problem and we will determine the structure of the minimizers of the limiting functional.

\subsection{Main Results}

\begin{center}
\textbf{\underline{Hypothesis on $ W $:}}
\end{center}
\textbf{(H1)} $ W \in C^{1,\alpha}_{loc}({\mathbb{R}}^m ; [0,+ \infty)) \;,\; \lbrace W=0 \rbrace = \lbrace a_1,a_2,...,a_N \rbrace \;,\: N \in \mathbb{N} \;\:, a_i $ are the global minima of $ W $. Assume also that
\begin{align*}
W_u(u) \cdot u > 0 \;\: \textrm{and} \;\: W(u)\geq c_1 |u|^2 \;,\:  \textrm{if}\;\: |u |>M .
\end{align*}

\begin{center}
\textbf{\underline{Hypothesis on the Dirichlet Data:}}
\end{center}
\textbf{(H2)}\textbf{(i)} $ |g_{\varepsilon}| \leq M \;,\: g_\varepsilon \stackrel{L^1(\Omega)}{\longrightarrow} g_0 $ and $ J_\varepsilon(g_\varepsilon, \Omega_{\rho_0} \setminus \Omega) \leq C \: , $ where $ \partial \Omega $ is Lipschitz and $ \Omega_{\rho_0} $ is a small dilation of $ \Omega \;,\: \rho_0 >1 $, in which $ g_\varepsilon $ is extended $(C ,\: M \; \textrm{indep. of}\;\: \varepsilon). \\ $
And either

$ \; $\textbf{(ii)} $ g_\varepsilon \in C^{1,\alpha}(\overline{\Omega}) \;,\: | g_{\varepsilon} |_{1,\alpha} \leq \dfrac{M}{\varepsilon} $ and $ \partial \Omega $ is $ C^2 $, where we denote with $ | \cdot |_{1, \alpha} $ as the $ C^{1, \alpha} $ norm. $ \\ $
Or \textbf{(ii')} $ g_\varepsilon \in H^1( \Omega) $ and $J_\varepsilon (u_\varepsilon, \Omega) \leq C $.
$ \\ $

For $ i \neq j \;\:,\; i,j \in \lbrace 1,2,...,N \rbrace $, let $ U \in W^{1,2}( \mathbb{R};\mathbb{R}^m) $ be the 1D minimizer of the action
\begin{equation}\label{EnergyofConnection}
\begin{gathered}
\sigma_{ij}:= \min \int_{-\infty}^{+\infty} \left( \frac{1}{2} |U'|^2 + W(U) \right) dt < +\infty \;\;\;, \\ \lim_{t \rightarrow - \infty} U(t) = a_i \;\;,\;\; \lim_{t \rightarrow + \infty} U(t) = a_j \;\:,\; U( \mathbb{R}) \in \mathbb{R}^m \setminus \lbrace W=0 \rbrace
\end{gathered}
\end{equation}
where $ U $ is a connection that connects $ a_i $ to $ a_j \;,\: i,j \in \lbrace 1,2,...,N \rbrace $. 

The existence of such geodesics has been proved under minimal assumptions on the potential $ W $ in \cite{ZunigaSternberg}.
$ \\ $

Let $ J_\varepsilon $ defined in \eqref{EnergyFunctional}, we define
\begin{equation}\label{EneryFunctionalwithBC}
\tilde{J}_{\varepsilon}(u, \Omega) := \begin{cases} J_{\varepsilon}(u,\Omega) \;\:,\; \textrm{if} \;\: u = g_{\varepsilon} \;\: \textrm{on} \;\: \Omega_{\rho_0} \setminus \Omega \;\:,\; u \in H^1_{loc}(\mathbb{R}^n;\mathbb{R}^m) \\ + \infty \;\;\;\;\;\;\;\;\;,\;\; \textrm{otherwise}
\end{cases}
\end{equation}
where $ \Omega \subset \Omega_{\rho_0} $ as in \textbf{(H2)(i)} and let
\begin{equation}\label{LimitingEneryFunctional}
J_0(u, \Omega) := \sum_{1 \leq i < j \leq N} \sigma_{ij} \mathcal{H}^{n-1} (\partial^* \Omega_i \cap \partial^* \Omega_j \cap \Omega) =  \sum_{1 \leq i < j \leq N} \sigma_{ij} \mathcal{H}^{n-1} (S_{ij}(u) \cap \Omega),
\end{equation}
where $ S_{ij}(u):= \partial^* \lbrace u=a_i \rbrace \cap \partial^* \lbrace u=a_j \rbrace \;\:,\; u \in BV(\Omega;\lbrace a_1,a_2,...,a_N \rbrace) $ and we denote as $ \partial^* \Omega_k $ the reduced boundary of $ \Omega_k $.

Finally we define the limiting functional subject to the limiting boundary conditions
\begin{equation}\label{tildeJ_0}
\tilde{J}_0 (u, \Omega) := \begin{cases} J_0(u,\Omega) \;\:,\; \textrm{if} \;\: u \in BV(\Omega;\lbrace a_1,a_2,...,a_N \rbrace) \;\: \textrm{and} \; u= g_0 \;\: \textrm{on} \;\: \Omega_{\rho_0} \setminus \Omega \\ + \infty \;\;\;\;\;\;\;\;\;,\;\; \textrm{otherwise}
\end{cases}
\end{equation}

We can write $ J_\varepsilon,J_0, \tilde{J}_\varepsilon , \tilde{J}_0 : L^1(\Omega; \mathbb{R}^n) \rightarrow \overline{\mathbb{R}} $, where $ \overline{\mathbb{R}} = \mathbb{R} \cup \lbrace \infty \rbrace $ and the $ \Gamma $-convergence will be with respect to the $ L^1 $ topology.

Our first main result is the following $ \\ $

\begin{theorem}\label{Theorem1} Let $ J_{\varepsilon} $ be defined by \eqref{EnergyFunctional} and $ \tilde{J}_{\varepsilon} \;,\: \tilde{J}_0 $ defined in \eqref{EneryFunctionalwithBC} and \eqref{tildeJ_0} respectively.

Then
\begin{equation}\label{GammaLimitwithBC}
\Gamma- \lim_{\varepsilon \rightarrow 0} \tilde{J}_{\varepsilon}(u,\Omega) = \tilde{J}_0(u,\overline{\Omega}) .
\end{equation}
\end{theorem}
$ \\ $

\begin{remark}\label{Remark1}
Note that the domain of $ \tilde{J}_0 $ is the closure of $ \Omega $, which means that there is a boundary term (see also (2.9) in \cite{ORS} for the analog in the scalar case). More precisely, by Proposition \ref{Proposition1} and Theorem 5.8 in \cite{EG} we can write
\begin{equation}
\begin{gathered}
\tilde{J}_0(u, \overline{\Omega}) = \frac{1}{2} \sum_{i=1}^N \int_{\overline{\Omega}} | D ( \phi_i \circ u) | \\ = \frac{1}{2} \sum_{i=1}^N \int_{\Omega} | D ( \phi_i \circ u) | + \frac{1}{2} \sum_{i=1}^N \int_{\partial \Omega} | T(\phi_i \circ u) - T(\phi_i \circ g_0) | \: d \mathcal{H}^{n-1} \\
\textrm{where} \;\: \phi_i \;\: \textrm{defined in} \;\: \eqref{Riemannianmetric} \;\: \textrm{and} \;\: T \;\: \textrm{is the trace operator for} \;\: BV \;\: \textrm{functions}.
\end{gathered}
\end{equation}
\end{remark}
$ \\ $

The overview of the strategy of the proof of Theorem \ref{Theorem1} is as follows. First we observe that the $ \Gamma- $limit established in \cite{Baldo}, in particular Theorem 2.5, holds also without the mass constraint (see Theorem \ref{GammaLimitThmBaldo} in Preliminaries section). Next, we apply a similar strategy to that of \cite[Theorem 3.7]{ABP} in which there is a $ \Gamma $-convergence result with boundary conditions in the scalar case which states that we can incorporate the constraint of Dirichlet values in the $ \Gamma- $limit, provided that this $ \Gamma- $limit is determined.
Since by Theorem \ref{GammaLimitThmBaldo} we have that $ J_\varepsilon \;\: \Gamma $-converges to $ J_0 $, we establish the $ \Gamma $-limit of $ \tilde{J}_\varepsilon $, that is, the $ \Gamma- $limit of the functional $ J_\varepsilon $ with the constraint of Dirichlet values. For the proof of the $ \Gamma- $limit we can assume either \textbf{(H2)(ii)} or \textbf{(H2)(ii')}.
$ \\ \\ $

Next, we study the solution of the geometric minimization problem that arise from the limiting functional.

In order to obtain precise information about the minimizer of the limiting functional $ \tilde{J}_0 (u, \overline{B}_1) \:, \; B_1 \subset \mathbb{R}^2 $, we impose that the limiting boundary conditions $ g_0 $ have connected phases. So we assume, $ \\ \\ $
\textbf{(H2) (iii)} Let $ g_0 = \sum_{i=1}^3 a_i \: \chi_{I_i}(\theta) \;,\; \theta \in [0, 2 \pi) \;,\; I_i \subset [0, 2 \pi) \;\:,\; \cup_{i=1}^3 I_i = [0, 2 \pi) $ be the limit of $ g_\varepsilon $. Assume that $ I_i $ are connected and that
\begin{align*}
\theta_0  < \frac{2 \pi}{3} \;,\: \textrm{where} \;\: \theta_0 \;\: \textrm{is the largest angle of the points} \;\: p_i = \partial I_k \cap \partial I_l \\ k \neq l \;,\: i \in \lbrace 1,2,3 \rbrace \setminus \lbrace k,l \rbrace . \;\;\;\;\;\;\;\;\;\;\;\;\;\;\;\;\;\;\;\;\;\;\;\;\;\;\;\;\;\;\;\;\;\;
\end{align*}

The assumption $ \theta_0 < \frac{2 \pi}{3} $ arises from the Proposition 3.2 in \cite{Morgan} that we utilize for the proof (see Proposition \ref{PropMorgan} in Preliminaries section) and guarantees that the boundary of the partition defined by the minimizer will be line segments meeting at a point inside $ B_1 $. 

Our second main result is the following 
$ \\ $

\begin{theorem}\label{TheoremTriod} Let $ u_0 = a_1 \chi_{\Omega_1} +a_2 \chi_{\Omega_2} + a_3 \chi_{\Omega_3} $ be a minimizer of $ \tilde{J}_0(u,\overline{B}_1) $ subject to the limiting Dirichlet values \textbf{(H2)(iii)}. 

Then the minimizer is unique and in addition,
\begin{equation}\label{ThmTriodEq}
\partial \Omega_i \cap \partial \Omega_j \;\: \textrm{are line segments meeting at} \;\: 120^o \;\: \textrm{in a point in} \;\: B_1 \;\: (i \neq j).
\end{equation}
$ \\ $
\end{theorem}

For proving Theorem \ref{TheoremTriod}, we first prove that the partition defined by $ u_0 $ is $ (M,0, \delta)- $minimal as in the Definition 2.1 in \cite{Morgan} (see Definition \ref{M,0,deltaminimalSets}). This is proved by a comparison argument by defining a Lipschitz perturbation of the partition of the minimizer with strictly less energy. Then, by utilizing a uniqueness result for $ (M, 0 ,\delta)- $minimal sets in \cite{Morgan} (see Proposition \ref{PropMorgan}), we can conclude that the minimizer of the limiting energy is unique and the boundaries of the partition that the minimizer defines are are line segments meeting at $ 120^o $ degrees in an interior point of the unit disc.

In the last subsection, we note that the result in Theorem \ref{TheoremTriod} can be extended also to the mass constraint case (see \cite{Baldo}). However, in this case the uniqueness will be up to rigid motions of the disc (see Theorem 3.6 and Theorem 4.1 in \cite{CR}).
$ \\ $
\subsection{Previous fundamental contributions}

We will now briefly introduce some of the well known results in the scalar case. The notion of $ \Gamma $-convergence was introduced by E. De Giorgi and T. Franzoni in \cite{DeGFranzoni} and in particular relates phase transition type problems with the theory of minimal surfaces. One additional application of $ \Gamma $-convergence is the proof of existence of minimizers of a limiting functional, say $ F_0 $, by utilizing an appropriate sequence of functionals $ F_\varepsilon $ that we know they admit a minimizer and the $ \Gamma $-limit of $ F_\varepsilon $ is $ F_0 $. And also vice versa (\cite{KS}), we can obtain information for the $ F_\varepsilon $ energy functional from the properties of minimizers of the limiting functional $ F_0 $. We can think of this notion as a generalization of the Direct Method in the Calculus of Variations i.e. if $ F_0 $ is lower semicontinuous and coercive we can take $ F_\varepsilon = F_0 $ and then $ \Gamma- $lim $ F_\varepsilon =F_0 $.

There are many other ways of thinking of this notion, such as a proper tool in finding the limiting functional among a sequence of functionals.

Let $ X $ be the space of the measurable functions $ u : \Omega \subset \mathbb{R}^n \rightarrow \mathbb{R} $ endowed with the $ L^1 $ norm and
\begin{align*}
F_\varepsilon(u,\Omega) := \begin{cases} \int_\Omega \frac{\varepsilon}{2} | \nabla u|^2 + \frac{1}{\varepsilon}W(u) dx \;\;,\; u \in W^{1,2} (\Omega ; \mathbb{R}) \cap X
\\ + \infty \;\;\;\;\;\;\;\;\;\;\;\;\;\;\;\;\;\;\;\;\;\;\;\;\;\;\;\;\;, \; \textrm{elsewhere in} \;\: X \end{cases}
\\
F_0(u,\Omega) := \begin{cases} \sigma \mathcal{H}^{n-1} (Su) \;\;,\; u \in SBV(\Omega ; \lbrace -1,1 \rbrace) \cap X \\ + \infty  \;\;\;\;\;\;\;\;\;\;\;\;\;, \; \textrm{elsewhere in} \;\: X 
\end{cases} \;\;\;\;\;\;\;\;\;\;
\\ \textrm{where} \;\: W : \mathbb{R} \rightarrow [0, + \infty ) \;,\; \lbrace W =0 \rbrace = \lbrace -1 , 1 \rbrace \;,\; \sigma = \int_{-1}^1 \sqrt{2 W(u)}du \\
{and} \;\: Su \;\: \textrm{is the singular set of the SBV function} \;\: u. \;\;\;\;\;\;\;\;\;\;\;\;\;\;\;\;
\end{align*}

Let now $ u_\varepsilon $ be a minimizer of $ F_\varepsilon $ subject to a mass constraint, that is, $ \int_{\Omega}u = V \in (0, |\Omega|) $. The asymptotic behavior of $ u_\varepsilon $ was first studied by Modica and Mortola in \cite{MM} and by Modica in \cite{Modica,Modica2}. Also, later Sternberg \cite{S} generalized these results for minimizers with volume constraint. Furthermore, Owen, Rubinstein and Sternberg in \cite{ORS} and Ansini, Braides and Piat in \cite{ABP}, among others, studied the asymptotic behavior of the minimizers subject to Dirichlet values for the scalar case.

As mentioned previously, one of the most important outcomes of $ \Gamma $-convergence in the scalar phase transition type problems is the relationship with minimal surfaces. More precisely, the well known theorem of Modica and Mortola states that the $ \varepsilon $-energy functional of the Allen-Cahn equation $ \Gamma $-converges to the perimeter functional that measures the perimeter of the interface between the phases (i.e. $ \Gamma - \textrm{lim} \: F_\varepsilon = F_0 $). So the interfaces of the limiting problem will be minimal surfaces.

This relationship is deeper as indicated in the De Giorgi conjecture (see \cite{DeGiorgi}) which states that the level sets of global entire solutions of the scalar Allen-Cahn equation that are bounded and strictly monotone with respect to $ x_n $, are hyperplanes if $ n \leq 8 $. The relationship with the Bernstein problem for minimal graphs is the reason why $ n \leq 8 $ appears in the conjecture. The $ \Gamma $-limit of the $ \varepsilon $-energy functional of the Allen-Cahn equation is a possible motivation behind the conjecture.

In addition, Baldo in \cite{Baldo} and Fonseca and Tartar in \cite{FT} extended the $ \Gamma $-convergence analysis for the phase transition type problems to the vector case subject to a mass constraint and the limiting functional measures the perimeter of the interfaces separating the phases, and thus there is a relationship with the problem of minimizing partitions. In section 5 we analyze this in the set up of Dirichlet boundary conditions. Furthermore, the general vector-valued coupled case has been thoroughly studied in the works of Borroso-Fonseca and Fonseca-Popovici in \cite{BF} and \cite{FP} respectively.

There are many other fundamental contributions on the subject, such as the works of Gurtin \cite{Gurtin,Gurtin2}, Gurtin and Matano \cite{GurtinMatano} on the Modica-Mortola functional and its connection with materials science, the work of Hutchingson and Tonegawa on the convergence of critical points in \cite{HT}, the work of Bouchitté \cite{Bouchitte} and of Cristoferi and Gravina \cite{CG} on space-dependent wells and extensions on general metric spaces in the work of Ambrosio in \cite{Ambrosio}. Several extensions to the non-local case and fractional setting have also been studied by Alberti-Bellettini in \cite{AB}, by Alberti-Bouchitté-Seppecher in \cite{ABS} and by Savin-Valdinoci in \cite{SV} among others.

$ \\ $

\textbf{Acknowledgements:} I wish to thank my advisor Professor Nicholas Alikakos for his guidance and for suggesting this topic as a part of my thesis for the Department of Mathematics and Applied Mathematics at the University of Crete. Also, I would like to thank Professor P. Sternberg and Professor F. Morgan for their valuable comments on a previous version of this paper, which let to various improvements. Finally, I would like to thank the anonymous referee for their valuable suggestions, which not only enhanced the presentation but also significantly improved the quality of the paper by relaxing some of the assumptions in our results.

$ \\ $
\section{Preliminaries}
\subsection{Specialized definitions and theorems for the $ \Gamma- $limit}

First, we will define the supremum of measures that allow us to express the limiting functional in an alternative way. Let $ \mu $ and $ \nu $ be two regular Borel measures on $ \Omega $ we denote by $ \mu \bigvee \nu $ the smallest regular positive measure which is greater than or equal to $ \mu $ and $ \nu $ on all borel subsets of $ \Omega $, for $ \mu \;,\: \nu $ being two regular positive Borel measures on $ \Omega . $ We have
\begin{align*}
(\mu \bigvee \nu)(\Omega) := \sup \lbrace \mu (A) + \nu(B) : A \cap B = \emptyset, \; A\cup B \subset \Omega, \; A \;\: \textrm{and} \;\: B \;\: \textrm{are open sets in} \;\: \Omega \rbrace .
\end{align*}

Now let
\begin{align*}
\bigvee_{k=1}^N \int_{\Omega} | D (\phi_k \circ u_0 ) | := \sup \lbrace \sum_{k=1}^N \int_{A_k} | D(\phi_k \circ u_0 ) | : \cup_{k=1}^N A_k \subset \Omega ,\\ A_i \cap A_j = \emptyset \:,\: i \neq j, \; A_i \;\: \textrm{open sets in} \;\: \Omega \rbrace.
\end{align*}

$ \\ $

We will now provide a Lemma from \cite{ABP} that is crucial in the description of the behavior of the $ \Gamma- $limit with respect to the set variable.
Let $ \Omega \subset \mathbb{R}^n $ be an open set. We denote by $ \mathcal{A}_{\Omega} $ the family of all bounded open subsets of $ \Omega $.

\begin{lemma}\label{LemmainABPcutoff} (\cite{ABP})
Let $ J_\varepsilon $ defined in \eqref{EnergyFunctional}. Then for every $ \varepsilon >0 $, for every bounded open sets $ U \:,\; U' \:,\; V $, with $ U \subset \subset U' $, and for every $ u,v \in L^1_{loc}(\mathbb{R}^n) $, there exist a cut-off function $ \phi $ related to $ U $ and $ U' $, which may depend on $ \varepsilon \;,\: U \;,\: U' \;,\: V \;,\: u \;,\: v $ such that
\begin{align*}
J_\varepsilon ( \phi u + (1- \phi) v, U \cup V) \leq J_\varepsilon (u, U') + J_\varepsilon (v,V) + \delta_\varepsilon (u,v,U,U',V),
\end{align*}
where $ \delta_\varepsilon : L^1_{loc}(\mathbb{R}^n)^2 \times \mathcal{A}^3_{\Omega} \rightarrow [0, + \infty) $ are functions depending only on $ \varepsilon $ and $ J_\varepsilon $ such that
\begin{align*}
\lim_{\varepsilon \rightarrow 0} \delta_{\varepsilon} (u_\varepsilon, v_\varepsilon, U,U',V) =0,
\end{align*}
whenever $  U \;,\: U' \;,\: V \in \mathcal{A}_{\Omega} \:,\; U \subset \subset U' $ and $ u_\varepsilon \:, \: v_\varepsilon \in L^1_{loc}(\mathbb{R}^n) $ have the same limit as $ \varepsilon \rightarrow 0 $ in $ L^1( (U' \setminus \overline{U}) \cap V ) $ and satisfy
\begin{align*}
\sup_{\varepsilon>0}( J_\varepsilon (u_\varepsilon,U') + J_\varepsilon(v_\varepsilon,V)) < + \infty .
\end{align*}
\end{lemma}
The above result is Lemma 3.2 in \cite{ABP} and has been proved in the scalar case. The proof also works in the vector case with minor modifications. In \cite{ABP}, there is an assumption on $ W $, namely $ W \leq c( | u |^\gamma +1) $ with $ \gamma \geq 2 $ (see (2.2) in \cite{ABP}). This assumption however is only utilized in the proof of Lemma 2.1 above to apply the dominated convergence theorem in the last equation. In our case this assumption is not necessary since $ W(u_\varepsilon) $ and $ W (g_\varepsilon) $ are uniformly bounded (see \textbf{(H2)(i)} and Lemma \ref{Lemma1}). In fact, the only reason we assume in \textbf{(H1)} that $ W(u) \geq c_1 | u |^2 $ for $ | u |>M $ is to apply the above Lemma.
$ \\ $

In \cite{Baldo} it has been proved that $ J_{\varepsilon} \;\: \Gamma -$converges to $ J_0 $ with mass constraint, but it also holds without mass constraint (see Theorem 2.5). We will point out this more clearly in the proof of Theorem \ref{Theorem1}. In particular, it holds
$ \\ $

\begin{theorem}\label{GammaLimitThmBaldo} (\cite{Baldo})
Let $ J_\varepsilon $ defined in \eqref{EnergyFunctional} and $ J_0 $ defined in \eqref{LimitingEneryFunctional}. Then $ \Gamma- \lim_{\varepsilon \rightarrow 0} J_{\varepsilon}(u,\Omega) = J_0(u, \Omega) $ in $ L^1(\Omega; \mathbb{R}^m). $ That is, for every $ u \in L^1(\Omega; \mathbb{R}^m) $, we have the following two conditions: 
$ \\ $ (i) If $ \lbrace v_\varepsilon \rbrace \subset L^1(\Omega; \mathbb{R}^m) $ is any sequence converging to $ u $ in $ L^1 $, then
\begin{equation}\label{GammaLiminfBaldoEq}
\liminf_{\varepsilon \rightarrow 0} J_\varepsilon (v_\varepsilon, \Omega) \geq J_0 (u, \Omega),
\end{equation}
and $ \\ $
(ii) There exist a sequence $ \lbrace w_\varepsilon \rbrace \subset L^1(\Omega; \mathbb{R}^m) $ converging to $ u $ in $ L^1 $ such that
\begin{equation}\label{GammaLimsupBaldoEq}
\lim_{\varepsilon \rightarrow 0} J_\varepsilon (w_\varepsilon, \Omega) = J_0 (u, \Omega).
\end{equation}
\end{theorem}
$ \\ $

\begin{remark}\label{RmkTechnicalAssumptionWBaldo} We note that in \cite{Baldo}, there is also a technical assumption for the potential $ W $ (see (1.2) in p.70). However for the proof of the $ \Gamma- $limit this assumption is only utilized for the proof of the liminf inequality in order to obtain the equiboundedness of the minimizers $ u_\varepsilon $ (see proof of (2.8) in \cite{Baldo}). However in our case we obtain equiboundedness from Lemma \ref{Lemma1} in the following section. Therefore in our case this assumption is dismissed.
\end{remark}

$ \\ $
\subsection{Specialized definitions and theorems for the Geometric problem}

In addition, we introduce the notion of $ (M, 0 ,\delta) $-minimality as defined in \cite{Morgan} together with a Proposition that certifies the shortest network connecting three given points in $ \mathbb{R}^2 $ as uniquely minimizing in the context of $ (M, 0 ,\delta) -$ minimal sets. This characterization is one of the ingredients for the solution of the geometric minimization problem in the last section. In fact, in \cite{Morgan} the more general notion of $ (M, \varepsilon ,\delta) $-minimality (or $ (M, c r^\alpha ,\delta) $-minimality) is introduced and regularity results for such sets are established.
Particularly, $ (M, 0 ,\delta) -$ minimality implies $ (M, c r^\alpha ,\delta) $-minimality (see \cite{Morgan}).

\begin{definition}\label{M,0,deltaminimalSets} (\cite{Morgan}) Let $ K \subset \mathbb{R}^n $ be a closed set and fix $ \delta >0 $. Consider $ S \subset \mathbb{R}^n \setminus K $ be a nonempty bounded set of finite $ m $-dimensional Hausdorff measure. $ S $ is $ (M, 0 ,\delta) $-minimal if $ S = spt( \mathcal{H}^m \lfloor S) \setminus K $ and
\begin{align*}
\mathcal{H}^m (S \cap W) \leq \mathcal{H}^m ( \phi(S \cap W)),
\end{align*}
whenever
\begin{align*}
(a) \;\: \phi: \mathbb{R}^n \rightarrow \mathbb{R}^n \;\: \textrm{is lipschitzian}, \;\;\;\;\;\;\;\;\;\;\;\;\;\;\;\;\;\;\;\;\;\;\;\;\;\;\;\;\;\; \\
(b) \;\: W = \mathbb{R}^n \cap \lbrace z \; : \; \phi(z) \neq z \rbrace , \;\;\;\;\;\;\;\;\;\;\;\;\;\;\;\;\;\;\;\;\;\;\;\;\;\;\;\;\;\;\;\; \\
(c) \;\: \textrm{diam}(W \cup \phi(W)) < \delta , \;\;\;\;\;\;\;\;\;\;\;\;\;\:\:\;\;\;\;\;\;\;\;\;\;\;\;\;\;\;\;\;\;\;\;\;\;\;\;\; \\
(d) \;\: \textrm{dist}(W \cup \phi(W), K) > 0. \;\;\;\;\;\;\;\;\;\;\;\;\;\;\;\;\;\;\;\;\;\;\;\;\;\;\;\;\;\;\;\;\;\;\;\;
\end{align*}
\end{definition}

$ \\ $

\begin{proposition}\label{PropMorgan} (\cite{Morgan}) Let $ K = \lbrace p_1,p_2 ,p_3 \rbrace $ be the vertices of a triangle in the open $ \delta $-ball $ B(0,\delta) \subset \mathbb{R}^2 $, with largest angle $ \theta $ for some fixed $ \delta >0 $. Then there exist a unique smallest $ (M,0 , \delta)- $minimal set in $ B(0,\delta) $ with closure containing $ K $, in particular:
\begin{align*}
(a) \;\: \textrm{if} \;\: \theta \geq 120^o \: ,\;\: \textrm{the two shortest sides of the triangle} ; \;\;\;\;\;\;\;\;\;\;\;\;\;\;\;\;\;\;\;\;\;\;\;\; \\
(b) \;\: \textrm{if} \;\: \theta < 120^o \: ,\;\: \textrm{segments from three vertices meeting at} \;\: 120^o . \;\;\;\;\;\;\;\;\;\;\;
\end{align*}
Here by the ``unique smallest'' we mean any other such $ (M,0 , \delta)- $minimal set $ S $ has larger one-dimensional Hausdorff measure.
\end{proposition}
$ \\ $

We now state a well known Bernstein-type theorem in $ \mathbb{R}^2 . \\ $

\begin{theorem}\label{BernsteinThmforPartitionsinthePlane} (\cite{A}) Let $ A $ be a complete minimizing partition in $ \mathbb{R}^2 $ with $ N=3 $ (three phases), with surface tension coefficients satisfying
\begin{equation}\label{triangleinequality}
\sigma_{ik} < \sigma_{ij} + \sigma_{jk} \;\;\;,\; \textrm{for} \;\: j \neq i,k \;\: \textrm{with} \;\: i,j,k \in \lbrace 1,2,3 \rbrace.
\end{equation}
Then $ \partial A $ is a triod.
\end{theorem}

For a proof and related material we refer to \cite{WhiteNotes} and the expository \cite{A}. $ \\ $

\section{Basic Lemmas}

\begin{lemma}\label{Lemma1} For every critical point $ u_{\varepsilon} \in W^{1,2}(\Omega;\mathbb{R}^m) $, satisfying \eqref{EulerLagrange} weakly together with the assumptions \textbf{(H1)} and \textbf{(H2)(i),(ii)}, it holds
\begin{align*}
|| u_{\varepsilon}||_{L^{\infty}} <M \;\;\;\; \textrm{and} \;\;\;\; || \nabla u_{\varepsilon}||_{L^{\infty}} < \frac{\tilde{C}}{\varepsilon}.
\end{align*}
\end{lemma}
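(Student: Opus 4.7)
The two bounds decouple: the $L^\infty$ estimate is a maximum-principle argument using \textbf{(H1)}, and the gradient estimate follows from a Schauder estimate after blowing up at the natural scale $\varepsilon$.

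\smallskip

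For the $L^\infty$ bound I would apply the weak maximum principle to $\phi := |u_\varepsilon|^2$. Using the Euler--Lagrange system \eqref{EulerLagrange},
\begin{equation*}
\tfrac{\varepsilon^{2}}{2}\,\Delta \phi \;=\; \varepsilon^{2}|\nabla u_\varepsilon|^2 \,+\, u_\varepsilon \cdot W_u(u_\varepsilon).
\end{equation*}
On the open set $\{|u_\varepsilon|>M\}$ the right-hand side is strictly positive by \textbf{(H1)}, so $\phi$ is strictly subharmonic there. The weak maximum principle therefore prevents any interior super-level set $\{\phi>M^2\}$ from having an interior maximum, so $\sup_{\Omega}\phi \le \max\bigl(M^2,\sup_{\partial\Omega}|g_\varepsilon|^2\bigr)\le M^2$ by \textbf{(H2)(i)}. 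The strict inequality stated in the lemma is recovered from the fact that the condition $W_u(u)\cdot u>0$ is open in $M$, so one may enlarge $M$ slightly without losing any hypothesis.

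\smallskip

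For the gradient bound I would rescale at scale $\varepsilon$. For arbitrary $x_0\in\overline\Omega$, set $v(y):=u_\varepsilon(x_0+\varepsilon y)$ on $B_1\cap\varepsilon^{-1}(\Omega-x_0)$. Then $v$ satisfies the $\varepsilon$-free semilinear elliptic system
\begin{equation*}
\Delta v \;=\; W_u(v),
\end{equation*}
with $\|v\|_{L^\infty}\le M$ by Step 1, so the right-hand side is bounded in terms of $W$ and $M$ only (and Lipschitz in $v$ since $W\in C^2$). The rescaled boundary data $\tilde g(y):=g_\varepsilon(x_0+\varepsilon y)$ satisfies
\begin{equation*}
|\tilde g|_{1,\alpha} \;\le\; \varepsilon^{1+\alpha}\,|g_\varepsilon|_{1,\alpha} \;\le\; M
\end{equation*}
by \textbf{(H2)(i)}; this is precisely why the boundary data are assumed to blow up at the rate $\varepsilon^{-(1+\alpha)}$. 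Interior Schauder estimates (when $\mathrm{dist}(x_0,\partial\Omega)\ge\varepsilon$) and boundary Schauder estimates (otherwise) then yield $\|v\|_{C^{1,\alpha}(B_{1/2})}\le\tilde C$ uniformly in $x_0$ and $\varepsilon$. Unscaling gives $|\nabla u_\varepsilon(x_0)|=\varepsilon^{-1}|\nabla_y v(0)|\le \tilde C/\varepsilon$.

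\smallskip

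The main technical point is the uniform boundary Schauder estimate. After the $\varepsilon^{-1}$-blow-up, the convex boundary $\partial\Omega$ is essentially flat at unit scale and its $C^{1,\alpha}$ norm in local graph charts is $O(\varepsilon^{\alpha})$, so the domain geometry does not degrade; combined with the uniform $C^{1,\alpha}$-bound on $\tilde g$, the Schauder constants remain independent of $\varepsilon$ and of the base point. The interior estimate and the maximum-principle step are then routine once the correct rescaling is fixed.
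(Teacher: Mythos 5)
Your proposal is correct and follows essentially the same two-step route as the paper: a maximum-principle argument applied to $|u_\varepsilon|^2$ using the sign condition $W_u(u)\cdot u>0$ for $|u|>M$ from \textbf{(H1)}, followed by the $\varepsilon$-rescaling $y=x/\varepsilon$ and a uniform $C^{1,\alpha}$ Schauder estimate, which is exactly why \textbf{(H2)(i)} imposes the rate $|g_\varepsilon|_{1,\alpha}\le M/\varepsilon^{1+\alpha}$. Your version is in fact slightly more careful than the paper's (correct $\varepsilon$-bookkeeping in $\Delta|u_\varepsilon|^2$, explicit separation of interior and boundary Schauder estimates, and a remark on the strict versus non-strict inequality), but no new idea is involved.
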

\begin{proof} $ \;\: $
By linear elliptic theory, we have that $ u_{\varepsilon} \in C^2(\Omega;{\mathbb{R}}^m) $ (see for example Theorem 6.13 in \cite{GT}). Set $ v_{\varepsilon}(x) = |u_{\varepsilon}(x)|^2 $, then
\begin{align*}
\Delta v_{\varepsilon} = 2W_u(u_{\varepsilon}) \cdot u_{\varepsilon} + 2 | \nabla u_{\varepsilon}|^2 >0 \;\;\;\; for \;\; | u_{\varepsilon}| >M \:,
\end{align*}
Hence $ \max_{\Omega} | u_{\varepsilon}|^2 \leq M^2 $.

On the other hand (from \textbf{(H2)}), $ \max_{\partial \Omega} | u_{\varepsilon} | \leq M $. Thus $ \max_{\overline{\Omega}} | u_{\varepsilon}| \leq M . \\ $ 
For the gradient bound, consider the rescaled problem $ y= \frac{x}{\varepsilon} $, denote by $ \tilde{u} \:,\: \tilde{g} $ the rescaled $ u_{\varepsilon} \:,\: g_{\varepsilon} $, so by elliptic regularity (see for example Theorem 8.33 in \cite{GT}),

\begin{align*}
| \tilde{u} |_{1,\alpha} \leq C (||\tilde{u}||_{L^{\infty}} + | \tilde{g}|_{1,\alpha}) \leq 2CM \\
\Rightarrow || \nabla \tilde{u} ||_{L^{\infty}} \leq 2CM \Rightarrow | \nabla u_{\varepsilon} | \leq \frac{\tilde{C}}{\varepsilon} .
\end{align*}
\end{proof}

\begin{lemma}\label{Lemma2} Let $ u_\varepsilon $ defined in \eqref{MinProblem}, then 
\begin{align*}
J_{\varepsilon} (u_{\varepsilon}) = \int_{\Omega} \left( \frac{\varepsilon}{2} |\nabla u_{\varepsilon}|^2 + \frac{1}{\varepsilon} W(u_{\varepsilon}) \right) dx  \leq C \;,
\end{align*}
$ C $ independent of $ \varepsilon >0 $, if $ \Omega $ is bounded.
\end{lemma}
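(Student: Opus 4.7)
The plan is to exploit the minimality of $u_\varepsilon$: for every admissible competitor $v \in W^{1,2}(\Omega;\mathbb{R}^m)$ with $v|_{\partial\Omega} = g_\varepsilon|_{\partial\Omega}$ one has $J_\varepsilon(u_\varepsilon,\Omega) \leq J_\varepsilon(v,\Omega)$, so it is enough to exhibit a single $v_\varepsilon$ whose $\varepsilon$-energy is bounded uniformly in $\varepsilon$. The naive choice $v_\varepsilon = g_\varepsilon$ is not good enough: by \textbf{(H2)(i)} one only has $|g_\varepsilon|_{1,\alpha} \leq M/\varepsilon^{1+\alpha}$, so $\varepsilon|\nabla g_\varepsilon|^2$ may blow up like $\varepsilon^{-(1+2\alpha)}$. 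What is really available is the uniform energy bound \textbf{(H2)(ii)} for the radial extension of $g_\varepsilon$ on $\Omega_2 \setminus \Omega$, and the task is to transfer that bound to $\Omega$.

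To do this I would reflect $g_\varepsilon|_{\Omega_2 \setminus \Omega}$ back into $\Omega$ through a radial map. Since $\Omega$ is convex and contains the origin, its radial function $\rho(\hat{e}) := \sup\{r > 0 : r\hat{e} \in \Omega\}$ is Lipschitz on $S^{n-1}$, and every $x \in \overline{\Omega}$ is uniquely represented as $x = r\hat{e}$ with $r \in [0,\rho(\hat{e})]$. Define
\begin{equation*}
\phi : \overline{\Omega} \longrightarrow \overline{\Omega_2 \setminus \Omega}, \qquad \phi(r\hat{e}) := (2\rho(\hat{e}) - r)\hat{e}.
\end{equation*}
Then $\phi$ is a bi-Lipschitz homeomorphism that fixes $\partial\Omega$ pointwise, and both $\|D\phi\|_{L^\infty}$ and the Jacobian factors $|\det D\phi|^{\pm 1}$ are bounded by constants depending only on the geometry of $\Omega$.

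I would then take $v_\varepsilon := g_\varepsilon \circ \phi$. Since $\phi|_{\partial\Omega} = \mathrm{id}$ and the radial extension of $g_\varepsilon$ is continuous across $\partial\Omega$, the function $v_\varepsilon$ lies in $W^{1,2}(\Omega;\mathbb{R}^m)$ and satisfies $v_\varepsilon|_{\partial\Omega} = g_\varepsilon|_{\partial\Omega}$, so it is admissible. The chain rule together with the change of variables $y = \phi(x)$ yields
\begin{equation*}
J_\varepsilon(v_\varepsilon,\Omega) \leq C_\Omega \int_{\Omega_2 \setminus \Omega}\left(\frac{\varepsilon}{2}|\nabla g_\varepsilon|^2 + \frac{1}{\varepsilon}W(g_\varepsilon)\right) dy = C_\Omega\, J_\varepsilon(g_\varepsilon,\Omega_2\setminus\Omega) \leq C_\Omega C_0,
\end{equation*}
by \textbf{(H2)(ii)}, and the minimality of $u_\varepsilon$ then closes the estimate.

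The only nontrivial step is the bi-Lipschitz and Jacobian control for $\phi$: this is where the geometric hypothesis on $\Omega$ really enters, and it is precisely why the author assumes $\Omega$ to be convex with the origin inside (as the author's remark following \textbf{(H2)} indicates, a convex-hull version would work just as well). Everything else is a routine change of variables combined with the Direct Method comparison.
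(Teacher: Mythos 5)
Your overall strategy (exhibit one admissible competitor with uniformly bounded $\varepsilon$-energy and invoke minimality) is the same as the paper's, but the specific competitor you construct does not work, and the key technical claim supporting it is false. The map $\phi(r\hat{e})=(2\rho(\hat{e})-r)\hat{e}$ cannot be a bi-Lipschitz homeomorphism of $\overline{\Omega}$ onto $\overline{\Omega_2\setminus\Omega}$: the first set is topologically a ball and the second an annulus, and indeed $\phi$ is singular at the origin, where the angular stretching gives $\|D\phi(r\hat{e})\|\sim (2\rho(\hat{e})-r)/r\to\infty$ and $|\det D\phi(r\hat e)|\sim ((2\rho(\hat e)-r)/r)^{n-1}\to\infty$ as $r\to 0$. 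Concretely, since the extension of $g_\varepsilon$ to $\Omega_2\setminus\Omega$ prescribed in \textbf{(H2)(ii)} is constant along rays, your $v_\varepsilon=g_\varepsilon\circ\phi$ is nothing but the ray-constant extension of the boundary trace into all of $\Omega$, i.e.\ $v_\varepsilon(r\hat{e})=g_\varepsilon(\rho(\hat{e})\hat{e})$. In the dimension the paper actually uses ($n=2$), writing $h(\theta)=g_\varepsilon(\rho(\theta)(\cos\theta,\sin\theta))$ one gets
\begin{equation*}
\int_\Omega|\nabla v_\varepsilon|^2\,dx=\int_0^{2\pi}|h'(\theta)|^2\left(\int_0^{\rho(\theta)}\frac{dr}{r}\right)d\theta=+\infty
\end{equation*}
whenever the boundary datum is non-constant, so $v_\varepsilon\notin W^{1,2}(\Omega;\mathbb{R}^m)$, it is not an admissible competitor, and the displayed inequality $J_\varepsilon(v_\varepsilon,\Omega)\le C_\Omega J_\varepsilon(g_\varepsilon,\Omega_2\setminus\Omega)$ collapses. (For $n\ge 3$ the final estimate happens to survive a direct computation exploiting the radial constancy, but not for the reason you give, since $\|D\phi\|_{L^\infty}$ and the Jacobian factors are genuinely unbounded.)

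The repair is to use the boundary data only in a thin collar of $\partial\Omega$ and to fill the bulk with a single zero of $W$, paying for a transition layer of width $O(\varepsilon)$; the normal-derivative and potential contributions of that layer are controlled by $|g_\varepsilon|\le M$ and $W(g_\varepsilon)\le\tilde M$, and the tangential contribution by \textbf{(H2)(ii)}. This is in effect what the paper does: it rescales $y=x/\varepsilon$ (so $\Omega=B_1$ becomes $B_R$ with $R=1/\varepsilon$ and $J_\varepsilon(u_\varepsilon)=R^{1-n}\tilde{J}_R(\tilde{u}_R)$) and compares $\tilde{u}_R$ with a function equal to $a_1$ on $B_{r-1}$, equal to $\tilde{u}_R$ near $\partial B_R$, and linearly interpolated across a unit-width annulus, using the uniform $L^\infty$ and gradient bounds of Lemma \ref{Lemma1} to bound the annulus energy by $CR^{n-1}$. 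Either version requires the transition-to-a-constant step that your construction omits.
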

\begin{proof} $ \;\: $
Without loss of generality we will prove Lemma \ref{Lemma2} for $ \Omega =B_1 $ (or else we can cover $ \Omega $ with finite number of unit balls and the outside part is bounded by \textbf{(H2)}(i)).

Substituting $ y = \dfrac{x}{\varepsilon} $,
\begin{align*}
J_{\varepsilon} (u_{\varepsilon}) = \int_{B_{\frac{1}{\varepsilon}}} \left( \frac{\varepsilon}{2} |\nabla_y \tilde{u}_{\varepsilon}|^2 \: \frac{1}{\varepsilon^2} + \frac{1}{\varepsilon} W(\tilde{u}_{\varepsilon}) \right) \varepsilon^n dy ,
\end{align*}
where $ \tilde{u}_{\varepsilon} = u_{\varepsilon} (\varepsilon y) $ and for $ \varepsilon = \dfrac{1}{R} $,
\begin{align*}
\Rightarrow J_{\varepsilon} (u_{\varepsilon}) = \varepsilon^{n-1}  
\int_{B_{\frac{1}{\varepsilon}}} \left( \frac{1}{2} |\nabla_y \tilde{u}_{\varepsilon}|^2 + W(\tilde{u}_{\varepsilon}) \right) dy = \frac{1}{R^{n-1}} \int_{B_R} \left( \frac{1}{2} |\nabla_y \tilde{u}_R|^2 + W(\tilde{u}_R) \right) dy \\ = \frac{1}{R^{n-1}} \tilde{J}_R (\tilde{u}_R) . \;\;\;\;\;\;\;\;\;\;\;\;\;\;\;\;\;\;\;\;\;\;\;\;\;\;\;\;\;\;\;\;
\end{align*}
So, $ \tilde{u}_R $ is minimizer of $ \tilde{J}_R (v) = \int_{B_R} (\frac{1}{2} |\nabla v|^2 + W(v) ) dx $. 

By Lemma \ref{Lemma1} applied in $ u_\varepsilon $, it holds that $ | \tilde{u}_R | ,| \nabla \tilde{u}_R | $ are uniformly bounded independent of $ R $ and via the comparison function (see \cite{AFS} p.135), for $ R>1 $
\begin{align*}
v(x) := \begin{cases} a_1 \;, \;\;\;\;\;\;\;\;\;\;\;\;\;\;\;\;\;\;\;\;\;\;\;\;\;\;\;\;\;\;\;\;\;\;\;\;\;\;\;\;\;\;\;\;\;\;\;\;\; \textrm{for} \;\: | x | \leq R-1 \\ (R- | x|)a_1 + (|x| -R +1)\tilde{u}_R(x) \;,\; \textrm{for} \;\: | x | \in (R-1,R] \\ \tilde{u}_R(x) \;, \;\;\;\;\;\;\;\;\;\;\;\;\;\;\;\;\;\;\;\;\;\;\;\;\;\;\;\;\;\;\;\;\;\;\;\;\;\;\;\;\;\;\; \textrm{for} \;\: |x| >R
\end{cases} ,
\end{align*}
we have
\begin{align*}
\tilde{J}_R (\tilde{u}_R) \leq J(v) \leq C R^{n-1} \;\;, \; C \;\: \textrm{independent of} \;\: R.
\end{align*}
Thus
\begin{align*}
J_{\varepsilon}(u_{\varepsilon}) = \frac{1}{R^{n-1}} \tilde{J}_R (\tilde{u}_R) \leq C \;\; ( C \;\: \textrm{independent of} \;\: \varepsilon >0 ).
\end{align*}
\end{proof}

\begin{lemma}\label{Lemma3} Let $ u_\varepsilon $ defined in \eqref{MinProblem}, then $ u_{\varepsilon} \stackrel{L^1}{\longrightarrow} u_0 $, along subsequences and $ u_0 \in BV(\Omega;\mathbb{R}^m)$. Moreover, $ u_0 = \sum_{i=1}^N a_i \chi_{\Omega_i} \:,\; \mathcal{H}^{n-1}(\partial^* \Omega_i) < \infty $ and $ | \Omega \setminus \cup_{i=1}^N \Omega_i | =0 $.
\end{lemma}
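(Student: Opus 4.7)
The plan is to adapt Baldo's composition trick from \cite{Baldo} to deduce vector-valued $L^1$ compactness and $BV$ regularity from the scalar energy bound of Lemma \ref{Lemma2}. First, by Lemma \ref{Lemma2}, $\int_\Omega W(u_\varepsilon)\,dx \leq C\varepsilon \to 0$. For each $i=1,\ldots,N$ I introduce the degenerate geodesic ``distance'' from $a_i$,
\begin{equation*}
\phi_i(u) := \inf\left\{ \int_0^1 \sqrt{2W(\gamma(t))}\,|\gamma'(t)|\,dt : \gamma \in C^1([0,1];\mathbb{R}^m),\ \gamma(0)=a_i,\ \gamma(1)=u \right\},
\end{equation*}
which is locally Lipschitz, vanishes only at $a_i$ (by the non-degeneracy of $W$ at the wells in \textbf{(H1)}), and satisfies $|\nabla \phi_i(u)| \leq \sqrt{2W(u)}$ a.e.

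Setting $v_i^\varepsilon := \phi_i \circ u_\varepsilon$, the chain rule and Young's inequality give
\begin{equation*}
|\nabla v_i^\varepsilon| \leq \sqrt{2W(u_\varepsilon)}\,|\nabla u_\varepsilon| \leq \frac{\varepsilon}{2}|\nabla u_\varepsilon|^2 + \frac{1}{\varepsilon}W(u_\varepsilon),
\end{equation*}
so $\int_\Omega |\nabla v_i^\varepsilon|\,dx \leq J_\varepsilon(u_\varepsilon) \leq C$ by Lemma \ref{Lemma2}. Combined with the $L^\infty$ bound from Lemma \ref{Lemma1}, each $v_i^\varepsilon$ is uniformly bounded in $BV(\Omega) \cap L^\infty(\Omega)$. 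The compact embedding $BV \hookrightarrow L^1$ and a diagonal extraction over $i=1,\ldots,N$ yield a subsequence with $v_i^\varepsilon \to v_i^0$ in $L^1(\Omega)$ and a.e. for every $i$; a further extraction gives $W(u_\varepsilon(x)) \to 0$ a.e.

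At such points, by the $L^\infty$ bound of Lemma \ref{Lemma1} the accumulation set of $\{u_\varepsilon(x)\}$ is contained in $\{a_1,\ldots,a_N\}$. Since $\phi_i(a_j)=0$ if and only if $i=j$, the $N$-tuples $(\phi_1(a_j),\ldots,\phi_N(a_j))$ are pairwise distinct in $j$, so the simultaneous a.e. convergence of all the $v_i^\varepsilon$ forces $u_\varepsilon(x)$ to have a unique limit $a_{j(x)} \in \{a_1,\ldots,a_N\}$. Defining $\Omega_j := \{x \in \Omega : u_\varepsilon(x) \to a_j\}$, we obtain $u_\varepsilon \to u_0 := \sum_{j=1}^N a_j \chi_{\Omega_j}$ a.e., and dominated convergence (using the $L^\infty$ bound) promotes this to $L^1(\Omega)$; the identity $|\Omega \setminus \cup_j \Omega_j| = 0$ is exactly the a.e. convergence.

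Finally, each $v_i^0$ is a $BV$ function taking values in the finite set $\{\phi_i(a_k) : k=1,\ldots,N\}$; each $\Omega_j$ is a finite intersection of level sets of the $v_i^0$, hence has finite perimeter, so $\mathcal{H}^{n-1}(\partial \Omega_j) < \infty$ and $u_0 \in BV(\Omega;\mathbb{R}^m)$. The main subtlety is the pointwise identification of the limit: a single geodesic distance cannot rule out oscillation between two wells equidistant from one reference $a_i$, and it is the simultaneous use of the full family $\{\phi_i\}_{i=1}^N$ that breaks this ambiguity and pins down $u_\varepsilon(x)$ to a unique well.
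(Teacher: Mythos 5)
Your argument is correct and is essentially the paper's: the paper obtains the $L^1$-compactness and the $BV$ bound by invoking Proposition 4.1 of \cite{Baldo}, whose proof is precisely the composition-with-geodesic-distances device ($v_i^\varepsilon=\phi_i\circ u_\varepsilon$, chain rule, Young's inequality, $BV\hookrightarrow L^1$) that you spell out, and then identifies $u_0$ by combining the a.e.\ convergence with dominated convergence in $\frac{1}{\varepsilon}\int_\Omega W(u_\varepsilon)\,dx\le C$. Your pointwise identification of the limit via the simultaneous convergence of all the $\phi_i\circ u_\varepsilon$ is only a cosmetic variant of that last step.
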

\begin{proof}
$ \;\: $
By Lemma \ref{Lemma1} we have that $ u_\varepsilon $ is equibounded. Now arguing as in the proof of Proposition 4.1 in \cite{Baldo} (see also Remark \ref{RmkTechnicalAssumptionWBaldo}), we obtain that $ || u_{\varepsilon} ||_{BV(\Omega;\mathbb{R}^m)} $ is uniformly bounded, $ u_\varepsilon \rightarrow u_0 $ in $ L^1 $ along subsequences and also $ u_0 \in BV(\Omega ; \mathbb{R}^m) . $

From Lemma \ref{Lemma2}, it holds
\begin{align*}
\frac{1}{\varepsilon} \int_{\Omega} W(u_{\varepsilon}(x))dx \leq C \;\:\;( C \;\: \textrm{independent of} \;\: \varepsilon >0).
\end{align*}
Since $ | u_{\varepsilon} | \leq M $ and $ W $ is continuous in $ \overline{B}_M \subset {\mathbb{R}}^m \: \Rightarrow W( u_{\varepsilon}) \leq \tilde{M} $, therefore by the dominated convergence theorem we obtain
\begin{align*}
\int_{\Omega} W(u_0(x)) dx =0 \Rightarrow u_0 \in \lbrace W=0 \rbrace \;\: a.e. \;\: \Rightarrow u_0 = \sum_{i=1}^N a_i \chi_{\Omega_i}
\end{align*}
where $ \chi_{\Omega_i} $ have finite perimeter since $ u_0 \in BV( \Omega ;\mathbb{R}^m) $ (see \cite{EG}). $ \\ $

The proof of Lemma \ref{Lemma3} is complete.
\end{proof}

$ \\ $

Also, $ g_0 $ takes values on $ \lbrace W=0 \rbrace $.

\begin{lemma}\label{g_0takevaluesinthewells}
Let $ g_0 $ be the limiting boundary condition of $ g_\varepsilon $. 

Then
\begin{align*}
g_0 = \sum_{i=1}^N a_i \chi_{I_i} \;\:, \; \textrm{where} \;\: I_i \;\: \textrm{have finite perimeter and} \;\: | \partial \Omega \setminus \cup_{i=1}^N I_i|=0.
\end{align*}
\end{lemma}
\begin{proof} $ \;\: $
By \textbf{(H2)(i)},
\begin{align*}
J_\varepsilon ( g_\varepsilon , \Omega_{\rho_0} \setminus \Omega) \leq C \;\;\;\;\; \\
\Rightarrow \frac{1}{\varepsilon} \int_{\Omega_{\rho_0} \setminus \Omega} W(g_\varepsilon) dx \leq C
\end{align*}
So, arguing as in the proof of Lemma \ref{Lemma3}, we have that $ g_0 \in \lbrace W=0 \rbrace $ and we conclude.
\end{proof}

\begin{proposition}\label{Proposition1} It holds that
\begin{equation}\label{Prop1}
\begin{gathered}
\int_{\Omega'} | D (\phi_k \circ u_0 ) | = \sum_{i=1, i\neq k}^N \sigma_{ik} \mathcal{H}^{n-1} ( \partial^* \Omega_k \cap \partial^* \Omega_i \cap \Omega' ) \\ k =1,2,..,N \;\:, \textrm{for every open} \;\: \Omega' \subset \Omega ,
\end{gathered}
\end{equation}
where $ \phi_k (z) = d(z,a_k) \;,\: k=1,2,...,N, $ and $ a_k $ are the zeros of $ W $ and $ d $ is the Riemannian metric derived from $ W^{1/2} $, that is
\begin{align}\label{Riemannianmetric}
d(z_1,z_2) := \inf \left\lbrace \int_0^1 \sqrt{2} W^{1/2} ( \gamma (t)) | \gamma' (t)| dt : \gamma \in C^1 ([0,1];{\mathbb{R}}^2), \gamma(0) = z_1, \: \gamma(1)=z_2 \right\rbrace .
\end{align}
\end{proposition}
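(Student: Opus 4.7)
The plan is to exploit the fact that $u_0$ is piecewise constant by Lemma \ref{Lemma3}, so that $\phi_k \circ u_0$ is itself a piecewise constant $BV$ function taking only finitely many values; the statement then reduces to the well-known representation of the total variation of such a function in terms of the reduced boundaries of its level sets.

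First I would identify the values $\phi_k(a_i) = d(a_i,a_k)$. Given a minimizer $U$ of \eqref{EnergyofConnection}, the equipartition identity $|U'|^2 = 2W(U)$ (which follows from the Euler--Lagrange equation via a standard first integral argument, or from Cauchy--Schwarz and minimality) turns the action integrand into the Riemannian arc length element $\sqrt{2W(U)}\,|U'|$, so $\sigma_{ij} = d(a_i,a_j)$ and the converse inequality $d(a_i,a_j)\le \sigma_{ij}$ follows directly from the AM-GM bound $\tfrac12|U'|^2 + W(U)\ge \sqrt{2W(U)}\,|U'|$. Under the standing hypothesis $\sigma_{ij}=\sigma$, this gives $\phi_k(a_k)=0$ and $\phi_k(a_i)=\sigma$ for $i\ne k$, and hence
\begin{equation*}
\phi_k\circ u_0 \;=\; \sigma\bigl(1-\chi_{\Omega_k}\bigr)\quad \text{a.e. in }\Omega,
\end{equation*}
using also the last assertion of Lemma \ref{Lemma3} that $|\Omega\setminus\bigcup_i \Omega_i|=0$.

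Second, I would invoke the standard $BV$ calculus for piecewise constant functions whose level sets have finite perimeter (see Evans--Gariepy \cite{EG}): for $v=\sum_i c_i\chi_{\Omega_i}$ one has
\begin{equation*}
|Dv|(\Omega') \;=\; \tfrac12\sum_{i\ne j} |c_i-c_j|\,\mathcal{H}^{n-1}\bigl(\partial^*\Omega_i\cap\partial^*\Omega_j\cap \Omega'\bigr),
\end{equation*}
the key point being that at $\mathcal{H}^{n-1}$-a.e.\ point of $\partial^*\Omega_i\cap \partial^*\Omega_j$ the measure-theoretic outer normals of $\Omega_i$ and $\Omega_j$ are opposite, so the individual contributions $c_i D\chi_{\Omega_i} + c_j D\chi_{\Omega_j}$ add up in absolute value to $|c_i-c_j|$. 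With $c_k=0$ and $c_i=\sigma$ for $i\ne k$, only pairs $(i,j)$ with exactly one index equal to $k$ contribute, and summing over them yields the right-hand side of \eqref{Prop1}.

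The main obstacle, and really the only nonroutine ingredient, is the identification $d(a_i,a_k)=\sigma_{ik}$; the remaining steps are bookkeeping with measures on reduced boundaries. One should also verify that $\phi_k$ is locally Lipschitz on $\mathbb{R}^m$ so that $\phi_k\circ u_0\in BV$ to begin with, but this is immediate since $W$ is continuous and the infimum defining $d$ is taken over $C^1$ curves, making $d(\cdot,a_k)$ locally Lipschitz with respect to the Euclidean metric on any compact set (for instance on the ball $\overline{B}_M$ containing the range of $u_0$ by Lemma \ref{Lemma1}).
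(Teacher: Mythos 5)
Your proposal is correct, and it reaches the same two essential ingredients as the paper --- the identification $\phi_k(a_i)=d(a_i,a_k)=\sigma$ for $i\neq k$ (and $=0$ for $i=k$) via equipartition, and the decomposition of $\partial^*\Omega_k$ into the pairwise intersections $\partial^*\Omega_k\cap\partial^*\Omega_i$ up to an $\mathcal{H}^{n-1}$-null set --- but it packages the measure-theoretic step differently. The paper applies the coarea formula for $BV$ functions to $\phi_k\circ u_0$, observes that the sublevel sets $\{\phi_k(u_0)\le t\}$ are $\emptyset$, $\Omega_k\cap\Omega'$, or $\Omega'$ according to whether $t<0$, $t\in[0,\sigma)$, or $t\ge\sigma$, and so obtains $\sigma\,\mathcal{H}^{n-1}(\partial^*\Omega_k\cap\Omega')$ before invoking the boundary decomposition; you instead note that $\phi_k\circ u_0=\sigma(1-\chi_{\Omega_k})$ and quote the total-variation formula for Caccioppoli partitions, $|Dv|(\Omega')=\tfrac12\sum_{i\ne j}|c_i-c_j|\,\mathcal{H}^{n-1}(\partial^*\Omega_i\cap\partial^*\Omega_j\cap\Omega')$, which bundles the perimeter identity and the boundary decomposition into one citation. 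The two routes are of equal length here; your version has the mild advantage of making transparent that for a two-valued composition one really only needs $|D\chi_{\Omega_k}|(\Omega')=\mathcal{H}^{n-1}(\partial^*\Omega_k\cap\Omega')$ plus the partition structure theorem, while the coarea route generalizes more gracefully if the values $\phi_k(a_i)$, $i\ne k$, were not all equal (in which case each sublevel set still contributes a perimeter term). One small point of care: in your sketch of $d(a_i,a_j)=\sigma_{ij}$ the roles of the two inequalities are slightly blurred --- AM--GM applied to the minimizer $U$ gives $\sigma_{ij}\ge\int\sqrt{2W(U)}\,|U'|\ge d(a_i,a_j)$, whereas the reverse inequality $\sigma_{ij}\le d(a_i,a_j)$ requires reparametrizing an arbitrary competitor curve over $\mathbb{R}$ so that equipartition holds, making its action equal to its Riemannian length. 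The paper sidesteps this by citing Theorem 2.1 in \cite{AFS}, and your argument is easily repaired along these standard lines, so this is a presentational rather than a substantive gap.
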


\begin{proof} $ \;\: $ The proof can be found in Proposition 2.2 in \cite{Baldo}.

\end{proof}

Furthermore, reasoning as in the proof of Proposition 2.2 in \cite{Baldo} we have,
\begin{equation}\label{HausdorffMeasuresEstimate}
\bigvee_{k=1}^N \int_{\Omega} | D (\phi_k \circ u_0 ) | = \sum_{1 \leq i< j \leq N} \sigma_{ij} \mathcal{H}^1 (\partial^* \Omega_i \cap \partial^* \Omega_j \cap \Omega) = J_0(u_0, \Omega).
\end{equation}
The above equation is an alternative way to express the limiting functional.
$ \\ $

\section{Proof of the $ \Gamma $-limit}

Throughout the proof of the $ \Gamma- $limit we will assume \textbf{(H1)} and \textbf{(H2)(i),(ii)}. The proof if we assume \textbf{(H2)(ii')} instead of \textbf{(H2)(ii)} is similar with minor modifications.

\begin{proof}[Proof of Theorem~\ref{Theorem1}]
$ \\ $

We begin by proving the $ \Gamma- \liminf $ inequality.

Let $ u_\varepsilon \in L^1(\Omega; \mathbb{R}^m) $ such that $ u_\varepsilon \rightarrow u $ in $ L^1(\Omega; \mathbb{R}^m) $. If $ u_\varepsilon \notin H^1_{loc} $ or $ u_\varepsilon \neq g_\varepsilon $ on $ \Omega_{\rho_0} \setminus \Omega $, where $ \Omega \subset \Omega_{\rho_0} $ as in \textbf{(H2)(i)}, then $ \tilde{J}_\varepsilon(u_\varepsilon, \Omega) = + \infty $ and the liminf inequality holds trivially. So, let $ u_\varepsilon \in H^1_{loc}(\Omega; \mathbb{R}^m) $ such that $ u_\varepsilon \rightarrow u $ in $ L^1 $ and $ u_\varepsilon = g_\varepsilon $ on $ \Omega_{\rho_0} \setminus \Omega $.

Let $ \rho >1 $ such that $ \rho < \rho_0 $ in \textbf{(H2)(i)}, we have
\begin{equation}\label{LiminfIneqEq1}
\tilde{J}_{\varepsilon}(u_{\varepsilon},\Omega) = J_{\varepsilon}(u_{\varepsilon}, \Omega_{\rho}) - J_{\varepsilon}(g_{\varepsilon}, \Omega_{\rho} \setminus \Omega),
\end{equation}
where $ \partial \Omega_\rho \in C^2 $ since it is a small dilation of $ \Omega $ and there is a unique normal vector $ \nu \perp \partial \Omega_{\rho} $, such that each $ x \in \partial \Omega $ can be written as $ x = y + \nu(y) d \;,\: d = dist(x, \partial \Omega_{\rho}) $ (see the Appendix in \cite{GT}). 

So,
\begin{equation}\label{LiminfIneqEq2}
J_{\varepsilon}(g_{\varepsilon}, \Omega_{\rho} \setminus \Omega) = \int_1^\rho \int_{\partial \Omega_r} \left( \frac{\varepsilon}{2} | \nabla g_\varepsilon|^2 + \frac{1}{\varepsilon} W(g_\varepsilon) \right) dS dr \leq C ( \rho -1),
\end{equation}
by Fubini's Theorem and \textbf{(H2)(i)}. 

Hence, by \eqref{LiminfIneqEq1}, for every $ u_{\varepsilon} $ converging to $ u $ in $ L^1 $ such that $ u_{\varepsilon}= g_{\varepsilon} $ on $ \Omega_{\rho_0} \setminus \Omega $ and $ \liminf_{\varepsilon \rightarrow 0} \tilde{J}_{\varepsilon} (u_{\varepsilon}, \Omega) < + \infty $, we have that
\begin{equation}\label{LiminfIneqEq3}
\liminf_{\varepsilon \rightarrow 0} \tilde{J}_{\varepsilon}(u_{\varepsilon}, \Omega) \geq \liminf_{\varepsilon \rightarrow 0} J_{\varepsilon}(u_{\varepsilon},\Omega_{\rho}) - O(\rho-1).
\end{equation}
Also, by the liminf inequality for $ J_{\varepsilon} $ (see Theorem \ref{GammaLimitThmBaldo} and \eqref{HausdorffMeasuresEstimate}), we can obtain
\begin{equation}\label{LiminfIneqEq4}
\liminf_{\varepsilon \rightarrow 0} J_{\varepsilon}(u_{\varepsilon}, \Omega_\rho) \geq \sum_{1 \leq i< j \leq N} \sigma_{ij} \mathcal{H}^1 (\partial^* \Omega_i \cap \partial^* \Omega_j \cap \Omega_{\rho} ) = J_0(u, \Omega_{\rho}).
\end{equation}
Thus, by \eqref{LiminfIneqEq3} and \eqref{LiminfIneqEq4}, passing the limit as $ \rho $ tends to $ 1 $ we have the liminf inequality
\begin{equation}\label{LiminfIneqEq5}
\liminf_{\varepsilon \rightarrow 0} \tilde{J}_{\varepsilon}(u_{\varepsilon}, \Omega) \geq J_0 (u, \overline{\Omega}),
\end{equation}
utilizing also the continuity of measures on decreasing sets.
$ \\ $

We now prove the $ \Gamma- $limsup inequality. Let $ u \in BV(\Omega ; \lbrace a_1,a_2,...,a_N \rbrace) $ be such that $ u = g_0 $ on $ \Omega_{\rho_0} \setminus \Omega . 
\\ $
a) We first assume that $ u = g_0 $ on $ \Omega \setminus \Omega_{\rho_1} $ with $ \rho_1 < 1 $ and $ |\rho_1 -1| $ small. 

As we observe in the proof of Theorem 2.5 in \cite{Baldo} the $ \Gamma $-limsup inequality for $ J_\varepsilon $ also holds without the mass constraint, see in particular the proof of Lemma 3.1 in \cite{Baldo}. Since the $ \Gamma $-liminf inequality holds, the $ \Gamma $-limsup inequality is equivalent with
\begin{equation}\label{LimsupIneqEq1}
J_0 (u, \Omega) = \lim_{\varepsilon \rightarrow 0} 
J_{\varepsilon}(u_{\varepsilon},\Omega ),
\end{equation}
for some sequence $ u_{\varepsilon} $ converging to $ u $ in $ L^1(\Omega;\mathbb{R}^m) $. So let $ u_\varepsilon $ be a sequence converging to $ u $ in $ L^1(\Omega_{\rho_1};\mathbb{R}^m) $ such that \eqref{LimsupIneqEq1} is satisfied. In particular $ u_\varepsilon $ converges to $ g_0 $ on $ \Omega\setminus \Omega_{\rho_1} $, where $ \Omega_{\rho_1} $ is a small contraction of $ \Omega $.

Now, utilizing the sequence $ u_\varepsilon $ obtained from \eqref{LimsupIneqEq1}, we will modify it by a cut-off function so that the boundary condition is satisfied. By Lemma \ref{LemmainABPcutoff}, there exist a cut-off function $ \phi $ between $ U= \Omega_{\frac{1+\rho_1}{2}} $ and $ U'= \Omega $ such that
\begin{align}\label{LimsupIneqEq2}
J_{\varepsilon}(u_\varepsilon \phi + (1- \phi )g_\varepsilon, \Omega) \leq J_{\varepsilon}(u_\varepsilon, \Omega) + J_{\varepsilon} (g_\varepsilon, V) + \delta_{\varepsilon} (u_\varepsilon, g_\varepsilon, U,U',V),
\end{align}
where $ V = \Omega \setminus \overline{\Omega}_{\rho_1} $ and $ g_\varepsilon $ is extended in $ V $ trivially.

By the assumptions on $ u_\varepsilon $ and \textbf{(H2)} we also have
\begin{align*}
u_\varepsilon \rightarrow g_0 \;,\;\;\;\;\; g_\varepsilon \rightarrow g_0 \;\;\; \textrm{in} \;\: L^1(V).
\end{align*}
Hence, again by Lemma \ref{LemmainABPcutoff} we get
\begin{align*}
\lim_{\varepsilon \rightarrow 0} \delta_{\varepsilon} (u_\varepsilon, g_\varepsilon, U,U',V) =0.
\end{align*}
Note that the condition $ \sup_{\varepsilon>0}( J_\varepsilon (u_\varepsilon,U') + J_\varepsilon(g_\varepsilon,V)) < + \infty $ in Lemma \ref{LemmainABPcutoff} is satisfied. To be more precise, from Lemma \ref{Lemma2} it holds
\begin{align*}
\sup_{\varepsilon>0} J_{\varepsilon} (u_\varepsilon, U') < + \infty \;\;,\; \textrm{where} \;\: U' = \Omega,
\end{align*}
and by \textbf{(H2)(i)},
\begin{align*}
\sup_{\varepsilon>0} J_{\varepsilon} (g_\varepsilon, V) < + \infty \;\;,\; \textrm{where} \;\: V = \Omega \setminus \overline{\Omega}_{\rho_1}.
\end{align*}
So, by \eqref{LiminfIneqEq1}, \eqref{LiminfIneqEq2} and \eqref{LimsupIneqEq2}
\begin{align*}
\Gamma - \limsup_{\varepsilon \rightarrow 0} \tilde{J}_\varepsilon ( \tilde{u}_{\varepsilon}, \Omega) \leq \tilde{J}_0(u, \Omega),
\end{align*}
where $ \tilde{u}_{\varepsilon}= u_\varepsilon \phi + (1- \phi)g_\varepsilon $ and $ \tilde{u}_{\varepsilon}= g_\varepsilon $ in $ \Omega_{\rho_0} \setminus \Omega . \\ $
b) In the general case we consider $ \rho_1 <1 $ and we define $ u_{\rho_1}(x) = u(\frac{1}{\rho_1} x) $ and without loss of generality we may asume that the origin of $ \mathbb{R}^n $ belongs in $ \Omega $. 

By the previous case (a) and \eqref{LimitingEneryFunctional},
\begin{align}\label{LimsupIneqEq3}
\begin{gathered}
\Gamma - \limsup_{\varepsilon \rightarrow 0} \tilde{J}_\varepsilon (u_{\rho_1}, \Omega) \leq \tilde{J}_0(u_{\rho_1}, \Omega) =   
\sum_{1 \leq i < j \leq N} \sigma_{ij} \mathcal{H}^{n-1} (S_{ij}(u_{\rho_1}) \cap \Omega)
\\ \leq \sum_{1 \leq i < j \leq N} \sigma_{ij} \mathcal{H}^{n-1} (S_{ij}(u) \cap \overline{\Omega}) + O(1 - \rho_1^{n-1}) \\ = \tilde{J}_0 (u, \overline{\Omega}) + O(1- \rho_1^{n-1}).
\end{gathered}
\end{align}
Since $ u_{\rho_1} $ converges to $ u $ as $ \rho_1 $ tends to $ 1 $, if we denote
\begin{align*}
J'(u_{\rho_1}, \Omega) := \Gamma - \limsup_{\varepsilon \rightarrow 0} \tilde{J}_\varepsilon (u_{\rho_1}, \Omega),
\end{align*}
then by the lower semicontinuity of the $ \Gamma -$upper limit (see e.g. Proposition 1.28 in \cite{Braides}) and \eqref{LimsupIneqEq3},
\begin{equation}\label{LimsupIneqEq4}
\Gamma - \limsup_{\varepsilon \rightarrow 0} \tilde{J}_\varepsilon (u_{\rho_1}, \Omega) \leq \liminf_{\rho_1 \rightarrow 1} J'(u_{\rho_1}, \Omega) \leq \tilde{J}_0 (u, \overline{\Omega}).
\end{equation}
Hence by \eqref{LiminfIneqEq5} and \eqref{LimsupIneqEq4} we get the required equality \eqref{GammaLimitwithBC}.
\end{proof}
$ \\ $

\section{Minimizing partitions and the structure of the minimizer}

In this section we begin with the basic definitions of minimizing partitions. Then we underline the relationship of minimizing partitions in $ \mathbb{R}^2 $ with the minimizers of the functional $ \tilde{J}_0 $ and 
we analyze the structure of the minimizer of $ \tilde{J}_0 $ that we obtain from the $ \Gamma $-limit. Utilizing a Bernstein type theorem for minimizing partitions we can explicitly compute the energy of the minimizer in Proposition \ref{PropositionEnergyEqual3} and by regularity results in \cite{Morgan} we can determine the precise structure of a minimizer subject to the limiting boundary conditions in Theorem \ref{TheoremTriod} and prove uniqueness. In subsection \ref{subsectionMinDim3} we make some comments for the limiting minimizers in dimension three. Finally, in the last subsection we note that we can extend these results to the mass constraint case. $ \\ $

Let $ \Omega \subset \mathbb{R}^n $ open, occupied by $ N $ phases. Associated to each pair of phases $ i $ and $ j $ there is a surface energy density $ \sigma_{ij} $, with $ \sigma_{ij}>0 $ for $ i \neq j $ and $ \sigma_{ij} = \sigma_{ji} $, with $ \sigma_{ii}=0 $. Hence, if $ A_i $ denoted the subset of $ \Omega $ occupied by phase $ i $, then $ \Omega $ is the disjoint union
\begin{align*}
\Omega = A_1 \cup A_2 \cup ... \cup A_N
\end{align*}
and the energy of the partition $ A = \lbrace A_i \rbrace_{i=1}^N $ is
\begin{align}\label{EnergyofPartition}
E(A) = \sum_{1 \leq i < j \leq N} \sigma_{ij} \mathcal{H}^{n-1}(\partial^* A_i \cap \partial^* A_j),
\end{align}
where $ \mathcal{H}^{n-1} $ is the $ (n-1) $-Hausdorff measure in $ \mathbb{R}^n $ and $ A_i $ are sets of finite perimeter. If $ \Omega $ is unbounded, for example $ \Omega =\mathbb{R}^n $ (we say then that $ A $ is complete), the quantity above in general will be infinity. Thus, for each $ W $ open, with $ W \subset \subset \Omega $, we consider the energy
\begin{equation}\label{EnergyofPartitionLocal}
E(A;W)=  \sum_{0< i < j \leq N} \sigma_{ij} \mathcal{H}^{n-1}(\partial^* A_i \cap \partial^* A_j \cap W).
\end{equation}
$ \\ $

\begin{definition}\label{DefinitionMinPar} The partition  $ A $ is a \textit{minimizing} $ N $-partition if given any $ W \subset \subset \Omega $ and any $ N $-partition $ A' $ of $ \Omega $ with
\begin{equation}
\bigcup_{i=1}^N (A_i \triangle A_i') \subset \subset W,
\end{equation}
we have
\begin{align*}
E(A;W) \leq E(A';W).
\end{align*}
\end{definition}
$ \\ $
The symmetric difference $ A_i \triangle A_i' $ is defined as their union minus their intersection, that is, $ A_i \triangle A_i' = (A_i \cup A_i') \setminus (A_i \cap A_i'). 
\\ $

To formulate the Dirichlet problem, we assume that $ \partial \Omega $ is $ C^1 $ and given a partition $ C $ of $ \partial \Omega $ up to a set of $ \mathcal{H}^{n-1} $-measure zero, we may prescribe the boundary data for $ A $:
\begin{align*}
(\partial_{\Omega} A)_i = \partial A_i \cap \partial \Omega = C_i \;,\;\;\;\; i=1,...,N.
\end{align*}
Now the energy is minimized subject to such a prescribed boundary. $ \\ $

\begin{remark}\label{Remark2} Note that the minimization of the functional $ \tilde{J}_0(u, \Omega) $ is equivalent to minimizing the energy $ E(A;\Omega) $ under the appropriate Dirichlet conditions. 
\end{remark}

$ \\ $

\tikzset{every picture/.style={line width=0.75pt}} 

\begin{tikzpicture}[x=0.75pt,y=0.75pt,yscale=-1,xscale=1]

\draw    (164,160) -- (256.8,249) ;
\draw    (133.8,52) -- (164,160) ;
\draw    (58.8,234) -- (164,160) ;
\draw   (393.17,87.95) -- (511.8,200) -- (331.65,197.33) -- cycle ;

\draw (219,105.4) node [anchor=north west][inner sep=0.75pt]    {$1$};
\draw (66,108.4) node [anchor=north west][inner sep=0.75pt]    {$2$};
\draw (154,233.4) node [anchor=north west][inner sep=0.75pt]    {$3$};
\draw (178,139.4) node [anchor=north west][inner sep=0.75pt]    {$\theta _{1}$};
\draw (131,131.4) node [anchor=north west][inner sep=0.75pt]    {$\theta _{2}$};
\draw (149,184.4) node [anchor=north west][inner sep=0.75pt]    {$\theta _{3}$};
\draw (276,268) node [anchor=north west][inner sep=0.75pt]   [align=left] {Figure 1.};
\draw (461,168.4) node [anchor=north west][inner sep=0.75pt]    {$\hat{\theta }_{1}$};
\draw (388,98.4) node [anchor=north west][inner sep=0.75pt]    {$\hat{\theta }_{2}$};
\draw (352,167.4) node [anchor=north west][inner sep=0.75pt]    {$\hat{\theta }_{3}$};
\draw (460,112.4) node [anchor=north west][inner sep=0.75pt]    {$\sigma _{1}{}_{2}$};
\draw (410,210.4) node [anchor=north west][inner sep=0.75pt]    {$\sigma _{1}{}_{3}$};
\draw (329,115.4) node [anchor=north west][inner sep=0.75pt]    {$\sigma _{2}{}_{3}$};

\end{tikzpicture}
$ \\ $

In Figure 1 we show a triod with angles $ \theta_1, \theta_2, \theta_3 $, and the corresponding triangle with their supplementary angles $ \hat{\theta}_i = \pi - \theta_i $ . For these angles Young's law holds, that is,
\begin{equation}\label{YoungLaw}
\frac{\textrm{sin}\hat{\theta}_1}{\sigma_{23}} = \frac{\textrm{sin}\hat{\theta}_2}{\sigma_{13}} = \frac{\textrm{sin}\hat{\theta}_3}{\sigma_{12}}.
\end{equation}
$ \\ $

\begin{definition}\label{DefinitionTriod}
Let $ \mathcal{A}_{x_0} = \lbrace A_1,A_2,A_3 \rbrace $ be a $ 3 -$partition of $ \mathbb{R}^2 $ such that $ A_i $ is a single infinite sector emanating from the point $ x_0 \in \mathbb{R}^2 $ with three opening angles $ \theta_i $ that satisfy \eqref{YoungLaw}. We call as a \textit{triod} $ C_{tr}(x_0) $ the boundary of the partition $ \mathcal{A}_{x_0} $, that is, $ C_{tr}(x_0) = \lbrace \partial A_i \cap \partial A_j \rbrace_{1 \leq i < j \leq 3} $.
\end{definition}

So, in other words, the triod is consisted of three infinite lines meeting at a point $ x_0 $ and their angles between the lines satisfy the Young's law \eqref{YoungLaw} (see Figure 1). As we see in Theorem \ref{BernsteinThmforPartitionsinthePlane}, the triod is the unique locally $ 3 -$minimizing partition of $ \mathbb{R}^2 $. The point $ x_0 $, i.e. the center of the triod, is often called a \textit{triple junction point}.

$ \\ $

\subsection{The structure of the minimizer in the disk}

Throughout this section we will assume that $ \sigma_{ij} = \sigma >0 $ for $ i \neq j $, therefore we have by Young's law $ \theta_i = \dfrac{2 \pi}{3} \;\:,\; i=1,2,3 $. 
As a result of Theorem \ref{BernsteinThmforPartitionsinthePlane}, we expect that, by imposing the appropriate boundary conditions, the minimizer $ u_0 $ of $ \tilde{J}_0(u, \overline{B}_1) \;,\; B_1 \subset \mathbb{R}^2 $ which we obtain from the $ \Gamma $-limit will be a triod with angles $ \dfrac{2 \pi}{3} $ restricted in $ B_1 $ and centered at a point $ x \in B_1 . \\ $

We now recall \textit{Steiner's problem} that gives us some geometric intuition about this fact.

Let us take three points $ A , \: B  $ and $ C $, arranged in any way in the plane. The problem is to find a fourth point $ P $ such that the sum of distances from $ P $ to the other three points is a minimum; that is we require $ AP + BP + CP $ to be a minimum length.

If the triangle $ ABC $ possesses internal angles which are all less than $ 120^o $, then $ P $ is the point such that each side of the triangle, i.e. $ AB ,\: BC $ and $ CA $, subtends an angle of $ 120^o $ at $ P $. However, if one angle, say $ A \hat{C} B $, is greater than $ 120^o $, then $ P $ must coincide with $ C . $

The \textit{Steiner's problem} is a special case of the Geometric median problem and has a unique solution whenever the points are not collinear. For more details and proofs see \cite{GP}. $ \\ $

$ \\ $

The problem of minimizing partitions subject to boundary conditions, in contrast to the mass constraint case, might not always admit a minimum, we provide an example in Figure 2 below.


\begin{center}

\tikzset{every picture/.style={line width=0.75pt}} 

\begin{tikzpicture}[x=0.75pt,y=0.75pt,yscale=-1,xscale=1]

\draw  [draw opacity=0] (354.38,69.55) .. controls (366.72,81.81) and (374.31,99.25) .. (374.03,118.5) .. controls (373.5,155) and (344.84,184.18) .. (310.03,183.69) .. controls (275.22,183.19) and (247.43,153.19) .. (247.97,116.69) .. controls (248.25,97.57) and (256.25,80.45) .. (268.81,68.56) -- (311,117.59) -- cycle ; \draw   (354.38,69.55) .. controls (366.72,81.81) and (374.31,99.25) .. (374.03,118.5) .. controls (373.5,155) and (344.84,184.18) .. (310.03,183.69) .. controls (275.22,183.19) and (247.43,153.19) .. (247.97,116.69) .. controls (248.25,97.57) and (256.25,80.45) .. (268.81,68.56) ;  
\draw    (268.81,68.56) -- (354.38,69.55) ;

\draw (304,98.4) node [anchor=north west][inner sep=0.75pt]    {$\Omega $};
\draw (244,41.4) node [anchor=north west][inner sep=0.75pt]    {$A$};
\draw (360,43.4) node [anchor=north west][inner sep=0.75pt]    {$B$};
\draw (305,32.4) node [anchor=north west][inner sep=0.75pt]    {$a_{2}$};
\draw (288,194.4) node [anchor=north west][inner sep=0.75pt]    {$g_{0} =a_{1}$};
\draw (284,253) node [anchor=north west][inner sep=0.75pt]   [align=left] {Figure 2.};

\end{tikzpicture}
\end{center}


However a minimizer will exist for the minimization problem $ \min_{u\in BV(\Omega; \lbrace W=0 \rbrace)} \tilde{J}_0(u, \overline{\Omega}) $, for instance the one we obtain from the $ \Gamma $-limit, which will form a ``boundary layer'' in the boundary of the domain instead of internal layer (i.e. the interface separating the phases). Particularly, in Figure 2 above, $ u_0 = a_1 $, a.e. will be a minimizer of $ \tilde{J}_0 $ and 
\begin{align*}
\tilde{J}_0(u_0,\overline{\Omega}) = \frac{1}{2} \sum_{i=1}^3 \int_{\partial \Omega} | T(\phi_i \circ u_0) - T ( \phi_i \circ g_0) | d\mathcal{H}^1 = \sigma \mathcal{H}^1(\partial \Omega_{AB}) ,
\end{align*}
where $ \partial \Omega_{AB} $ is the part of the boundary of $ \Omega $ in which $ g_0 = a_2 $. When there are no line segments in the boundary of the domain or when $ g_0 $ does not admit jumps nearby such line segments, then we expect that there are no boundary layers and the boundary term in the energy of $ \tilde{J}_0 $ vanishes (see Remark \ref{Remark1}), otherwise we could find a minimizer with strictly less energy. In the cases where the boundary term vanishes we can write $ \tilde{J}_0(u_0, \overline{\Omega})  = \tilde{J}_0(u_0,\Omega) $. This can be proved rigorously in the case where $ \Omega = B_1 $ and assuming \textbf{(H2)(iii)}, utilizing also Proposition \ref{PropMorgan} as we will see in the proof of Theorem \ref{TheoremTriod}.
$ \\ $

\begin{remark}\label{Remark3} For the mass constraint case, by classical results of Almgren's improved and simplified by Leonardi in \cite{Leonardi} for minimizing partitions with surface tension coefficients $ \sigma_{ij} $ satisfying the strict triangle inequality (see \eqref{triangleinequality}), $ \Omega_j $ can be taken open with $ \partial \Omega_j $ real analytic except possibly for a singular part with Hausdorff dimension at most $ n-2 $. Therefore $ \partial^* \Omega_i \cap \partial^* \Omega_j = \partial \Omega_i \cap \partial \Omega_j  \;,\: \mathcal{H}^{n-1} $-a.e., where $ u_0 = \sum_{i=1}^N a_i \chi_{\Omega_i} $ is the minimizer of $ J_0 $ with a mass constraint. These regularity results have been stated by White in \cite{White} but without providing a proof. Also, Morgan in \cite{Morgan2} has proved regularity of minimizing partitions in the plane subject to mass constraint. However, we deal with the problem with boundary conditions, so we cannot apply these regularity results.
\end{remark}

$ \\ $

\textbf{Notation:} We set as $ x_0 \in B_1 $ the point such that the line segments starting from $ p_i = \partial I_k \cap \partial I_l \;,\: k \neq l \;,\: i \in \lbrace 1,2,3 \rbrace \setminus \lbrace k,l \rbrace $ and ending at $ x_0 $ meet all at angle $ \frac{2 \pi}{3} $ (see \textbf{(H2)(iii)} and Proposition \ref{PropMorgan}). Also we denote by $ C_0 $ the sum of the lengths of these line segments. The following Proposition measures the energy of the limiting minimizer. $ \\ $

\begin{proposition}\label{PropositionEnergyEqual3} Let $ (u_\varepsilon) $ be a minimizing sequence of $ \tilde{J}_\varepsilon (u,B_1) $. Then $ u_\varepsilon \rightarrow u_0 $ in $ L^1 $ along subsequence with $ u_0 \in BV(B_1; \lbrace a_1,a_2,a_3 \rbrace) $ and $ u_0 $ is a minimizer of $ \tilde{J}_0(u, \overline{B}_1) $  subject to the limiting Dirichlet values \textbf{(H2)(iii)}, where we extend $ u $ by setting $ u=g_0 $ on $ \mathbb{R}^2 \setminus B_1 $.

In addition, we have
\begin{align}\label{InterfacesEqual3}
\sum_{1 \leq i <j \leq 3} \mathcal{H}^1 (\partial^* \Omega_i \cap \partial^* \Omega_j \cap \overline{B}_1 ) = C_0 \;,
\end{align}
where $ u_0 = a_1 \chi_{\Omega_1} +a_2 \chi_{\Omega_2} + a_3 \chi_{\Omega_3}. $
\end{proposition}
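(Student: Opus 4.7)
The first assertion is a direct consequence of the fundamental theorem of $\Gamma$-convergence. Lemma \ref{Lemma2} supplies the uniform bound $\tilde{J}_\varepsilon(u_\varepsilon, B_1) \leq C$, so Lemma \ref{Lemma3} applies and yields $L^1$-subconvergence $u_\varepsilon \to u_0$ with $u_0 \in BV(B_1;\{a_1,a_2,a_3\})$. Theorem \ref{Theorem1} identifies the $\Gamma$-limit as $\tilde{J}_0(\cdot, \overline{B}_1)$, and the standard consequence that $L^1$-limits of minimizing sequences are minimizers of the $\Gamma$-limit gives that $u_0$ minimizes $\tilde{J}_0(\cdot, \overline{B}_1)$ subject to the Dirichlet data $g_0$ prescribed by \textbf{(H2)(iii)}.

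For the second assertion, I would use \eqref{J_0} to reduce \eqref{InterfacesEqual3} to proving $\min \tilde{J}_0(\cdot, \overline{B}_1) = 3\sigma$. The upper bound follows from an explicit triod competitor $\tilde{u}$: partition $B_1$ by the three radial segments at polar angles $\pi/3,\, \pi,\, 5\pi/3$ meeting at the origin and terminating at the midpoints of the three arcs where $g_0$ is constant. Then $\tilde{u}|_{\partial B_1} = g_0$ almost everywhere, so the boundary contribution in Remark \ref{Remark1} vanishes; the three interior segments each have unit length, giving $\tilde{J}_0(\tilde{u}, \overline{B}_1) = 3\sigma$.

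The matching lower bound is the main obstacle. My plan is to invoke the regularity theory for planar minimizing partitions cited in the paper: the Bernstein-type theorem forces the tangent cone of any $\tilde{J}_0$-minimizer at each interior point to be either a line or a triod, and Proposition 3.2 of \cite{Morgan} upgrades this to the statement that the interface consists of finitely many straight segments meeting only at triple junctions with angles $2\pi/3$, and terminating on $\partial B_1$ exactly at the three jump points $P_1, P_2, P_3$ of $g_0$, which by \textbf{(H2)(iii)} sit at polar angles $0,\, 2\pi/3,\, 4\pi/3$. A boundary-layer competitor such as $u \equiv a_1$ in $B_1$ is ruled out by a direct comparison with $\tilde{u}$, since the boundary term of Remark \ref{Remark1} then exceeds $3\sigma$. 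Once the minimizing network is known to consist of segments connecting $P_1,P_2,P_3$, the lower bound reduces to the classical Steiner tree problem on the equilateral triangle $P_1P_2P_3$ of side $\sqrt{3}$, whose optimum is $\sqrt{3}\cdot\sqrt{3}=3$. Combining this with the upper bound and with \eqref{J_0} yields \eqref{InterfacesEqual3}.
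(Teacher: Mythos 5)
Your first paragraph matches the paper: uniform energy bound from Lemma \ref{Lemma2}, compactness from Lemma \ref{Lemma3}, and the standard liminf/limsup sandwich from Theorem \ref{Theorem1} to pass minimality to the limit. The paper writes out the sandwich explicitly, but your appeal to the ``fundamental theorem of $\Gamma$-convergence'' is the same argument.

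The second half has a concrete error and a gap. First, your upper-bound competitor is misplaced: under \textbf{(H2)(iii)} the jump points of $g_0$ sit at polar angles $0$, $2\pi/3$, $4\pi/3$, so the three radial segments must terminate there, not at the midpoints $\pi/3$, $\pi$, $5\pi/3$ of the arcs of constancy. With your choice no assignment of $a_1,a_2,a_3$ to the three sectors gives $\tilde{u}|_{\partial B_1}=g_0$ a.e., the boundary term of Remark \ref{Remark1} does not vanish, and you do not get $\tilde{J}_0(\tilde{u},\overline{B}_1)=3\sigma$. The fix is immediate, but as written the claimed upper bound fails. (You later place $P_1,P_2,P_3$ at the correct angles, so this is an internal inconsistency.) Second, your lower bound routes through regularity: you would need to first show that the minimizing partition is an $(M,0,\delta)$-minimal set before Proposition 3.2 of \cite{Morgan} applies, and then justify that the interface network must be a connected network spanning $P_1,P_2,P_3$ so that the Steiner comparison is legitimate; both steps are asserted rather than proved, and the first is exactly the content the paper defers to Theorem \ref{TheoremTriod}. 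The paper's own lower bound avoids regularity entirely: it extends $u_0$ to all of $\mathbb{R}^2$ by the exact triod outside $B_1$, obtaining a complete $3$-partition $\tilde{\Omega}$ that agrees with the triod partition $A$ outside $\overline{B}_1$, and then the Bernstein-type minimality of the triod gives $E(A,\overline{B}_1)\le E(\tilde{\Omega},\overline{B}_1)$, i.e.\ $3\sigma\le\tilde{J}_0(u_0,\overline{B}_1)$, by cancelling the common exterior energy. That comparison is both shorter and logically prior to the regularity statement, so if you keep your route you must supply the $(M,0,\delta)$-minimality argument and the reduction to the Steiner problem in full.
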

$ \\ $

\begin{proof} $ \;\: $
From Lemma \ref{Lemma2}, \ref{Lemma3} it holds that if $ u_\varepsilon $ is a minimizing sequence for $ \tilde{J}_\varepsilon(u,B_1) $, then $ \tilde{J}_\varepsilon(u_\varepsilon,B_1) \leq C $ and thus $ u_\varepsilon \rightarrow u_0 $ in $ L^1 $ along subsequence. The fact that $ u_0 $ is a minimizer of $ \tilde{J}_0 $ is a standard fact from the theory of $ \Gamma- $convergence. 
It can be seen as follows.

Let $ w \in BV(\overline{B_1}, \lbrace a_1,a_2,a_3 \rbrace )$ such that $ w = g_0 $ on $ \mathbb{R}^2 \setminus B_1 $, then from the limsup inequality in Theorem \ref{Theorem1}, we have that there exists $ w_\varepsilon \in H^1_{loc}(\mathbb{R}^2 ;\mathbb{R}^m) \;\:,\; w_\varepsilon = g_\varepsilon $ on $ \mathbb{R}^2 \setminus B_1 $ such that $ w_\varepsilon \rightarrow w $ in $ L^1 $ and $ \limsup_{\varepsilon \rightarrow 0} \tilde{J}_\varepsilon (w_\varepsilon ,B_1) \leq \tilde{J}_0 (w, \overline{B}_1) $. Now since $ u_\varepsilon $ is a minimizing sequence for $ \tilde{J}_\varepsilon(u,B_1) $ and from the liminf inequality in Theorem \ref{Theorem1}, we have
\begin{equation}\label{LimitingMinimality}
\begin{gathered}
\tilde{J}_0 (u_0 , \overline{B}_1) \leq \liminf_{\varepsilon \rightarrow 0} \tilde{J}_\varepsilon (u_\varepsilon, B_1) \leq \liminf_{\varepsilon \rightarrow 0} \tilde{J}_\varepsilon ( w_\varepsilon,B_1) \\ \leq \limsup_{\varepsilon \rightarrow 0} \tilde{J}_\varepsilon ( w_\varepsilon,B_1) \leq \tilde{J}_0 (w, \overline{B}_1)
\end{gathered}
\end{equation}
$ \\ $

For proving \eqref{InterfacesEqual3}, we utilize Theorem \ref{BernsteinThmforPartitionsinthePlane} (i.e. Theorem 2 in \cite{A}). Since the triod is a minimizing 3-partition in $ \mathbb{R}^2 $ we have that for any $ W \subset \subset \mathbb{R}^2 $ and any partition it holds that $ E(A,W) \leq E(V,W) $, where suppose that $ A = \lbrace A_1,A_2,A_3 \rbrace $ is the partition of the triod and $ V = \lbrace V_1,V_2,V_3 \rbrace $ is a 3-partition in $ \mathbb{R}^2 . $

We have $ u_0 = a_1 \chi_{\Omega_1} +a_2 \chi_{\Omega_2} + a_3 \chi_{\Omega_3} $ such that $ u_0 = g_0 $ on $ \partial B_1 $ and extend $ u_0 $ in $ \mathbb{R}^2 $, being the triod with $ \theta_i = \dfrac{2 \pi}{3} $ in $ \mathbb{R}^2 \setminus B_1 $ centered at $ x_0 $. This defines a 3-partition in $ \mathbb{R}^2 $, noted as $ \tilde{\Omega} = \lbrace \tilde{\Omega}_i \rbrace_{i=1}^3 $. Since the triod is a minimizing 3-partition in the plane, we take any $ W \subset \subset \mathbb{R}^2 $ such that $ B_2 \subset \subset W $ and $ \bigcup_{i=1}^3 (A_i \triangle \tilde{\Omega}_i) \subset \subset W $, so we have
\begin{equation}\label{ComparisonwithTriod}
E(A, W) = E(A,\overline{B}_1) + E(A , W \setminus \overline{B}_1) \leq E( \tilde{\Omega},W) = E( \tilde{\Omega} , \overline{B}_1) + E( \tilde{\Omega} ,W \setminus \overline{B}_1)
\end{equation}
where $ A $ is the partition of the triod.

Now since
\begin{align*}
 E(A , W \setminus \overline{B}_1) = E( \tilde{\Omega} ,W \setminus \overline{B}_1)
\end{align*}
from the way we extended $ u_0 $ in $ \mathbb{R}^2 $ and 
\begin{align*}
E(A , \overline{B}_1) = \sigma \sum_{1 \leq i < j \leq 3} \mathcal{H}^1 ( \partial A_i \cap \partial A_j \cap \overline{B}_1) = C_0 \sigma 
\end{align*}
since $ \partial A_i \cap \partial A_j \cap \overline{B}_1 $ are line segments inside $ B_1 $ with sum of their lengths equals $ C_0 $, we conclude
\begin{equation}\label{Lowerinequality}
\begin{gathered}
C_0 \sigma \leq E( \tilde{\Omega} , \overline{B}_1) = \tilde{J}_0 (u_0, \overline{B}_1) \\
\Leftrightarrow C_0 \leq \sum_{1 \leq i < j \leq 3} \mathcal{H}^1(\partial^* \Omega_i \cap \partial^* \Omega_j \cap \overline{B}_1)
\end{gathered}
\end{equation}

For the upper bound inequality $ \sum_{1 \leq i < j \leq 3} \mathcal{H}^1(\partial^* \Omega_i \cap \partial^* \Omega_j \cap \overline{B}_1) \leq C_0 $, we consider as a comparison function $ \tilde{u}= a_1 \chi_{A_1} +a_2 \chi_{A_2} + a_3 \chi_{A_3} $, where $ C_{tr}(x_0)= \lbrace A_1,A_2,A_3 \rbrace $ is the partition of the triod centered at $ x_0 \in B_1 $ and angles $ \theta_i = \frac{2 \pi}{3} $ (see Definition \ref{DefinitionTriod}).

Then $ \tilde{u} $ satisfies the boundary condition $ \tilde{u} = g_0 $ on $ \mathbb{R}^2 \setminus B_1 $ and therefore by the minimality of $ u_0 $ we have
\begin{equation}\label{UpperBoundIneqProof}
\begin{gathered}
\tilde{J}_0 (u_0, \overline{B}_1) \leq \tilde{J}_0 ( \tilde{u}, \overline{B}_1) = C_0 \sigma
\\ \Rightarrow \sum_{1 \leq i < j \leq 3} \mathcal{H}^1(\partial^* \Omega_i \cap \partial^* \Omega_j \cap \overline{B}_1) \leq C_0.
\end{gathered}
\end{equation}
\end{proof}

\begin{corollary}\label{CorDensityofPartition}
Assume for simlicity that $ x_0 $ in Proposition \ref{PropositionEnergyEqual3} above is the origin of $ \mathbb{R}^2 $. Then for every $ R>0 $ the energy of the limiting minimizer will satisfy
\begin{equation}\label{CorEnergyofLimitingMinimizer}
\tilde{J}_0 (u_0, \overline{B}_R) = 3 \sigma R.
\end{equation}
In addition, there exists an entire minimizer in the plane and the partition that defines is a minimal cone.
\end{corollary}
\begin{proof} $ \;\: $
Since $ x_0 $ is the origin of $ \mathbb{R}^2 $, it holds that $ C_0 $ in \eqref{InterfacesEqual3} equals $ 3 $. Arguing as in Proposition \ref{PropositionEnergyEqual3} above we can similarly obtain a minimizer of $ \tilde{J}_0 (u_0, \overline{B}_R) $ that satisfies \eqref{CorEnergyofLimitingMinimizer}. By a diagonal argument the minimizer can be extended in the entire plane and will also satisfy
\begin{align*}
\dfrac{\mathcal{H}^1(\partial \Omega_i \cap \partial \Omega_j \cap B_R)}{\omega_1 R} = C \;\;\:,\; \forall \; R >0.
\end{align*}
Thus, the partition that it defines is a minimal cone (see \cite{WhiteNotes} or \cite{A}).
\end{proof}

$ \\ $

Finally, we will prove that the minimizer of $ \tilde{J}_0 $ in $ \overline{B}_1 $ is unique, that is, the only minimizer is the triod restricted to $ B_1 $ centered at a point in $ B_1 . $ In Figure 3 below we provide the structure of the minimizer $ u_0 $ obtained in Theorem \ref{TheoremTriod}. $ \\ $

\begin{center}
\tikzset{every picture/.style={line width=0.75pt}} 

\begin{tikzpicture}[x=0.75pt,y=0.75pt,yscale=-1,xscale=1]

\draw   (296,137) .. controls (296,93.92) and (330.92,59) .. (374,59) .. controls (417.08,59) and (452,93.92) .. (452,137) .. controls (452,180.08) and (417.08,215) .. (374,215) .. controls (330.92,215) and (296,180.08) .. (296,137) -- cycle ;
\draw    (338.3,69) -- (374.3,97) ;
\draw    (374.3,97) -- (374,215) ;
\draw    (374.3,97) -- (410.3,69) ;
\draw    (375.3,108) .. controls (361.3,113) and (356.3,95) .. (363.3,88) ;
\draw    (384.3,89) .. controls (396.3,98) and (385.3,113) .. (375.3,108) ;

\draw (322,41.4) node [anchor=north west][inner sep=0.75pt]    {$A$};
\draw (408,45.4) node [anchor=north west][inner sep=0.75pt]    {$B$};
\draw (368,217.4) node [anchor=north west][inner sep=0.75pt]    {$C$};
\draw (394.29,86.41) node [anchor=north west][inner sep=0.75pt]  [font=\small,rotate=-0.16]  {$\frac{2\pi }{3}$};
\draw (333.29,85.41) node [anchor=north west][inner sep=0.75pt]  [font=\small,rotate=-0.16]  {$\frac{2\pi }{3}$};
\draw (349,34.4) node [anchor=north west][inner sep=0.75pt]    {$g_{0} =a_{1}$};
\draw (454,154.4) node [anchor=north west][inner sep=0.75pt]    {$g_{0} =a_{2}$};
\draw (240,143.4) node [anchor=north west][inner sep=0.75pt]    {$g_{0} =a_{3}$};
\draw (343,259) node [anchor=north west][inner sep=0.75pt]   [align=left] {Figure 3.};
\end{tikzpicture}
\end{center}

\begin{proof}[Proof of Theorem~\ref{TheoremTriod}] $ \;\: $
Firstly, we show that the minimizing partition of $ B_1 $ with respect to the boundary conditions defined from $ g_0 $, is a $ (M,0,\delta) $-minimal for $ \delta>0 $ 
(see Definition \ref{M,0,deltaminimalSets}). If not, let $ S $ be the partition defined from $ u_0 $, we can find a Lipschitz function $ \phi : \mathbb{R}^2 \rightarrow \mathbb{R}^2 $ such that
\begin{align*}
\mathcal{H}^1 (S \cap W) > \mathcal{H}^1( \phi(S \cap W)),
\end{align*}
with 
\begin{align*}
 W = \mathbb{R}^2 \cap \lbrace x : \phi(x) \neq x \rbrace \;\:,\; \textrm{diam}(W \cup \phi(W)) < \delta \\ \textrm{and dist}(W \cup \phi(W), \mathbb{R}^2 \setminus B_1) >0. \;\;\;\;\;\;\;\;\;\;\;\;\;\;
\end{align*}

So if we consider
the partition
\begin{align*}
\tilde{S} := \begin{cases} S \;\;,\; S \cap W = \emptyset \\ \phi(S \cap W) \;\;,\; S \cap W \neq \emptyset
\end{cases} ,
\end{align*}
then the boundary of the partition defined by $ \tilde{S} $ will satisfy the boundary conditions (since dist$ (W \cup \phi(W), \mathbb{R}^2 \setminus B_1) >0 $) and also $ \mathcal{H}^1 (\tilde{S}) < \mathcal{H}^1(S) $ which contradicts the minimality of $ S . $

Thus, by \textbf{(H2)(iii)} we apply Proposition \ref{PropMorgan} and we have that the unique smallest $ (M,0, \delta) $-minimal set consists of three line segments from the three vertices defined from $ g_0 $ (i.e. the jump points in $ \partial B_1 $) meeting at $ \frac{2 \pi}{3} $. The meeting point is unique and belongs in the interior of $ B_1 $. Thus, $ \partial \Omega_i \cap \partial \Omega_j = \partial^* \Omega_i \cap \partial^* \Omega_j $ are line segments meeting at $ \frac{2 \pi}{3} $ in an interior point of $ B_1 $.
\end{proof}
$ \\ $

\begin{corollary}\label{CorollaryTriod} Let $ u_0 = a_1 \chi_{\Omega_1} +a_2 \chi_{\Omega_2} + a_3 \chi_{\Omega_3} $ be a minimizer of $ \tilde{J}_0(u,\overline{B}_1) $ subject to the limiting Dirichlet values $ g_0( \theta) = a_1 \chi_{(0, \frac{2 \pi}{3})} + a_2 \chi_{(\frac{2 \pi}{3} , \frac{4 \pi}{3})} +a_3 \chi_{(\frac{4 \pi}{3} , 2 \pi)} \;,\; \theta \in (0, 2\pi ) . $
Then $ \partial \Omega_i \cap \partial \Omega_j $ are radi of $ B_1 \;,\: | \Omega_i |= \frac{1}{3}| B_1 | $ and the minimizer is unique.
\end{corollary}
\begin{center}
\tikzset{every picture/.style={line width=0.75pt}} 

\begin{tikzpicture}[x=0.75pt,y=0.75pt,yscale=-1,xscale=1]

\draw   (228.2,131.4) .. controls (228.2,89.76) and (261.96,56) .. (303.6,56) .. controls (345.24,56) and (379,89.76) .. (379,131.4) .. controls (379,173.04) and (345.24,206.8) .. (303.6,206.8) .. controls (261.96,206.8) and (228.2,173.04) .. (228.2,131.4) -- cycle ;
\draw    (303.6,56) -- (303.6,131.4) ;
\draw    (303.6,131.4) -- (365.8,174.8) ;
\draw    (303.6,131.4) -- (243.8,176.8) ;
\draw    (304,121) .. controls (316.8,116.8) and (321.8,131.8) .. (313.8,137.8) ;

\draw (293,22.4) node [anchor=north west][inner sep=0.75pt]    {$A$};
\draw (387,185.4) node [anchor=north west][inner sep=0.75pt]    {$B$};
\draw (210,197.4) node [anchor=north west][inner sep=0.75pt]    {$C$};
\draw (327,85.4) node [anchor=north west][inner sep=0.75pt]    {$\Omega _{1}$};
\draw (295,163.4) node [anchor=north west][inner sep=0.75pt]    {$\Omega _{2}$};
\draw (254,85.4) node [anchor=north west][inner sep=0.75pt]    {$\Omega _{3}$};
\draw (386,85.4) node [anchor=north west][inner sep=0.75pt]    {$g_{0} =a_{1}$};
\draw (273,215.4) node [anchor=north west][inner sep=0.75pt]    {$g_{0} =a_{2}$};
\draw (165,102.4) node [anchor=north west][inner sep=0.75pt]    {$g_{0} =a_{3}$};
\draw (272,252) node [anchor=north west][inner sep=0.75pt]   [align=left] {Figure 4.};
\draw (320,111.4) node [anchor=north west][inner sep=0.75pt]  [font=\small]  {$\frac{2\pi }{3}$};
\end{tikzpicture}
\end{center}

In Figure 4 above we illustrate the structure of the minimizer $ u_0 $ obtained in Corollary \ref{CorollaryTriod}. $ \\ $

\subsection{Minimizers in dimension three}\label{subsectionMinDim3}

In this subsection we will briefly make some comments for the structure of minimizers in $ \mathbb{R}^3 $. If we impose the appropriate boundary conditions in $ B_R \subset \mathbb{R}^3 $ and $ \lbrace W=0 \rbrace = \lbrace a_1,a_2,a_3 \rbrace \;,\: g_\varepsilon \rightarrow g_0 \;\: \textrm{in} \;\: L^1(B_R; \mathbb{R}^3) $ such that the partition in $ \partial B_R $ defined by $ g_0 $ is equal to the partition of $ (C_{tr} \times \mathbb{R}) \cap \partial B_R $, where $ C_{tr} $ is the triod as in Figure 1 (with equal angles), then by Theorem 3 in \cite{A}, arguing as in Proposition \ref{PropositionEnergyEqual3} (see also Corollary \ref{CorDensityofPartition}), we can obtain 
\begin{align*}
\tilde{J}_0 (u, B_R) = \frac{3}{2} \sigma \pi R^2 \;,
\end{align*}
which gives
\begin{align*}
\frac{ \mathcal{H}^2 ( \partial \Omega_i \cap \partial \Omega_j \cap B_R)}{\omega_2 R^2} = \frac{3}{2} \;,
\end{align*}
where $ \omega_2 $ is the volume of the 2-dimensional unit ball (see \cite{WhiteNotes}). That is, the partition that the minimizer defines can be extended to a minimal cone in $ \mathbb{R}^3 $. Now since the only minimizing minimal cones are the triod and the tetrahedral cone (see \cite{Taylor}), then the minimizer of $ \tilde{J}_0 $ is such that $ u_0 = \sum_{i=1}^3 a_i \chi_{\Omega_i} $, where $ \Omega = \lbrace \Omega_i \rbrace_{i=1}^3 $ is the partition of $ (C_{tr} \times \mathbb{R} ) \cap B_R $.

Similarly, if $ \lbrace W=0 \rbrace = \lbrace a_1,a_2,a_3,a_4 \rbrace $ and we impose the Dirichlet conditions such that $ g_0 $ defines the partition of the tetrahedral cone intersection with $ \partial B_R $, then again $ u_0 = \sum_{i=1}^4 a_i \chi_{\Omega_i} $, where $ \Omega = \lbrace \Omega_i \rbrace_{i=1}^4 $ is the partition of the tetrahedral cone restricted in $ B_R $.

$ \\ $

\subsection{Minimizers in the disc for the mass constraint case}

Throughout this subsection we will assume that $ a_i \;,\: i=1,2,3, $ are affinely independent, that is, they are not contained in a single line. This can also be expressed as
\begin{equation}\label{AffineIndependencyofa_i}
\textrm{whenever} \;\: \sum_{i=1}^3 a_i \lambda_i =0 \;\: \textrm{with} \;\: \sum_{i=1}^3 \lambda_i =0 \;,\: \textrm{then} \;\: \lambda_i =0 \;,\: i=1,2,3.
\end{equation}
In addition, we consider that $ m =(m_1,m_2) \in \mathbb{R}^2 $ such that $ m_1,m_2 >0 $ (as in \cite{Baldo}).

Let $ u_0 $ be a minimizer of $ J_0(u,B_1) \;,\: B_1 \subset \mathbb{R}^2 $ defined in \eqref{LimitingEneryFunctional} subject to the mass constraint
\begin{equation}\label{MassConstraint}
 \int_{B_1} u(x) dx = m \;,
\end{equation}
(i.e. the minimizer $ u_0 $ of Theorem p.70 in \cite{Baldo}) and $ \lbrace W=0 \rbrace = \lbrace a_1,a_2,a_3 \rbrace $. Then $ u_0 = \sum_{i=1}^3 a_i \chi_{\Omega_i} $, where $ \Omega_1,\Omega_2,\Omega_3 $ is a partition of $ B_1 $ which minimizes the quantity
\begin{align}\label{J_0MinimizationQuantity}
\sum_{1 \leq i<j \leq 3} \sigma \mathcal{H}^1 (\partial^* \Omega_i \cap \partial^* \Omega_j) \;,
\end{align}
among all other partitions of $ B_1 $ such that $ \sum_{i=1}^3 | \Omega_i | a_i = m . $

\begin{theorem}\label{MinimizerwithMassConstrantUniq}
Let $ u_0 $ be a minimizer of $ J_0(u,B_1) $ as above and assume that 
\begin{align}\label{ConditionOntheconstraintm}
m = \sum_{i=1}^3 c_i a_i \;\;,\; \textrm{where} \;\: c_i >0 \;,\; \textrm{with} \;\: \sum_{i=1}^3 c_i = | B_1 |.
\end{align}
Then
\begin{equation}\label{MinimizerwithMassConstrantUniqEq}
\begin{gathered}
| \Omega_i | = c_i \;,\: i=1,2,3 \;,\;
\partial^* \Omega_i \cap \partial^* \Omega_j = \partial \Omega_i \cap \partial \Omega_j \;\: \textrm{are piecewise smooth} \\ \textrm{and the minimizer is unique up to a rigid motion of the disc}.
\end{gathered}
\end{equation}

In particular, the boundary of the partition is consisted of three circular arcs or line segments meeting at an interior vertex at $ 120 $ degrees angles, reaching orthogonally $ \partial B_1 $ and so that the sum of geodesic curvature is zero.
\end{theorem}
\begin{proof} $ \;\: $
We have that $ u_0 = \sum_{i=1}^3 a_i \chi_{\Omega_i} $, where $ \Omega_i $ are such that $ \sum_{i=1}^3 | \Omega_i | = | B_1 | $ and $ u_0 $ minimizes the quantity \eqref{J_0MinimizationQuantity}.

By the assumption \eqref{ConditionOntheconstraintm}, since $ u_0 $ satisfies \eqref{MassConstraint}, we have
\begin{equation}\label{ProofMinimizerwithMassConstrantUniqEq1}
\begin{gathered}
\sum_{i=1}^3 a_i | \Omega_i | = \sum_{i=1}^3 c_i a_i \;\: \textrm{and} \;\: \sum_{i=1}^3 (  | \Omega_i | -c_i) = 0 \\ \Rightarrow
| \Omega_i | = c_i \;\:,\; i=1,2,3 \;,\; \textrm{and} \;\: c_i \in (0, | B_1 |),
\end{gathered}
\end{equation}
since $ a_i $ are affinely independent.

Now by Theorem 4.1 in \cite{CR} we conclude that the minimizer is a standard graph i.e. it is consisted of three circular arcs or line segments meeting at an interior vertex at $ 120 $ degrees angles, reaching orthogonally $ \partial B_1 $ and so that the sum of geodesic curvature is zero. So, $ \partial^* \Omega_i \cap \partial^* \Omega_j = \partial \Omega_i \cap \partial \Omega_j $ are piecewise smooth.

Finally, the minimizer is unique up to rigid motions of the disc by Theorem 3.6 in \cite{CR}.
\end{proof}

Note that in the case where $ m = \frac{1}{3} | B_1 | \sum_{i=1}^3 c_i a_i $, it holds that $ | \Omega_i | = \frac{1}{3} | B_1 | \;,\; i=1,2,3 \;,\; \textrm{and} \;\: \partial \Omega_i \cap \partial \Omega_j $ are line segments meeting at the origin and the minimizer is unique up to rotations.
$ \\ $


\begin{thebibliography}{99}
\itemsep=0pt

\bibitem{AFS} Nicholas D. Alikakos, Giorgio Fusco, Panayotis Smyrnelis, \textit{Elliptic Systems of Phase
Transition Type} . Progress in Nonlinear Differential Equations and their Applications, Birkhauser (2018)


\bibitem{A} Nicholas D. Alikakos, \textit{On the structure of phase transition maps for three or more coexisting phases} , Geometric Partial Differential Equations proceedings pp. 1–31 (2013)

\bibitem{AB} Giovanni Alberti and Giovanni Bellettini, \textit{A non-local anisotropic model for phase transitions: asymptotic behaviour of rescaled energies}, European Journal of Applied Mathematics, Vol 9, Issue 3 (1998).

\bibitem{ABS} G. Alberti, G. Bouchitté and P. Seppecher, \textit{Un résultat deperturbations singulières avec la norme $ H^{1/2} $}, C.R.Acad. Sci. Paris 319-Serie I, 333-338 (1994).

\bibitem{Ambrosio} Luigi Ambrosio, \textit{Metric space valued functions of bounded variation} , Annali della Scuola Normale Superiore di Pisa - Classe di Scienze, Serie 4, Vol 17, no. 3, pp. 439-478 (1990).

\bibitem{ABP} Nadia Ansini, Andrea Braides, Valeria Chiadò Piat, \textit{Gradient theory of phase transitions in composite media}, Proceedings of the Royal Society of Edinburgh, 133A, 265-296, (2003).


\bibitem{Baldo} Sisto Baldo, \textit{Minimal interface criterion for phase transitions in mixtures of Cahn-Hilliard fluids}, Annales de l'I.H.P. Analyse non lineaire, Tome 7, no. 2, pp. 67-90 (1990).

\bibitem{BF} Ana Cristina Barroso and Irene Fonseca, \textit{Anisotropic singular perturbations—the vectorial case}, Proceedings of the Royal Society of Edinburgh Section A: Mathematics, Vol 124, Issue 3 (2011).

\bibitem{Bouchitte} Guy Bouchitté, \textit{Singular perturbations of variational problems arising from a two-phase transition model}, Applied Mathematics and Optimization volume 21, pp. 289–314 (1990).

\bibitem{Braides} Andrea Braides, \textit{$ \Gamma -$convergence for Beginners} , Oxford Lecture Series in Mathematics and its Applications

\bibitem{CR} Antonio Canete and Manuel Ritore, \textit{Least-Perimeter Partitions of the Disk into three Regions of Given Areas} , Indiana University Mathematics Journal,
Vol. 53, No. 3 , pp. 883-904 (2004).

\bibitem{Chan} C.C. Chan, \textit{Structure of the singular set in energy-minimizing partitions and area-minimizing surfaces in $ \mathbb{R}^N $} , PhD thesis, Stanford University, 1995.

\bibitem{CG} Riccardo Cristoferi and Giovanni Gravina, \textit{Sharp interface limit of a multi-phase transitions model under nonisothermal conditions}, Calculus of Variations and Partial Differential Equations, 60, 142 (2021).

\bibitem{EG} Lawrence Evans, Ronald Gariepy, \textit{Measure Theory and Fine Properties of Functions} , Published April 19, 2015 by Chapman and Hall/CRC.

\bibitem{DeGiorgi} E. De Giorgi, \textit{Convergence problems for functionals and operators} , Proccedings of the International Meeting on Recent Methods in Non linear Analysis pp.131-188 (1978)

\bibitem{DeGFranzoni} E. De Giorgi and Tullio Franzoni, \textit{Su un tipo di convergenza variazionale} Atti della Accademia Nazionale dei Lincei. Classe di Scienze Fisiche, Matematiche e Naturali. Rendiconti Serie 8 58 (1975), fasc. n.6, p. 842-850, (Italian)

\bibitem{Gurtin} Morton E. Gurtin, \textit{On phase transitions with bulk, interfacial, and boundary Energy}, Archive for Rational Mechanics and Analysis volume 96, pages 243–264 (1986).

\bibitem{Gurtin2} Morton E. Gurtin, \textit{Some Results and Conjectures in the Gradient Theory of Phase Transitions}, Metastability and Incompletely Posed Problems, vol 3. pp 135–146 (1987).

\bibitem{GurtinMatano} Morton E. Gurtin and Hiroshi Matano, \textit{On the structure of equilibrium phase transitions within the gradient theory of fluids}, Quarterly of Applied Mathematics, Vol. 46, No. 2, pp. 301-317 (1988).

\bibitem{FT} I. Fonseca and L. Tartar , \textit{The gradient theory of phase transitions for systems with two potential wells}, Proceedings of the Royal Society of Edinburgh Section A: Mathematics 111, 1-2 (1989), 89-102.

\bibitem{FP} I. Fonseca and C. Popovici,\textit{Coupled Singular Perturbations for Phase Transitions}, Asymptotic Analysis, vol. 44, no. 3-4, pp. 299-325, (2005). 


\bibitem{GP} E.N. Gilbert and H.O. Pollak \textit{Steiner Minimal Trees}, SIAM Journal on Applied Mathematics , Vol. 16, No. 1 , pp. 1-29 (1968).

\bibitem{GT} David Gilbarg, Neil Trudinger, \textit{Elliptic Partial Differential Equations of Second Order}, Classics in Mathematics, Springer, Second Edition.

\bibitem{HT} J. Hutchinson and Y. Tonegawa, \textit{Convergence of phase interfaces in the van der Waals-Cahn-Hilliard theory}, Calc Var 10, 49–84 (2000).

\bibitem{KS} Robert V. Kohn and Peter Sternberg, \textit{Local minimisers and singular perturbations} , Proceedings of the Royal Society of Edinburgh. A 111, 69-84.

\bibitem{Leonardi} Gian Paolo Leonardi, \textit{Infiltrations in immiscible fluids systems}, Proceedings of the Royal Society of Edinburgh. A, Vol 131, Issue 2 (2007).

\bibitem{MM} L. Modica and S. Mortola, \textit{Un esempio di $ \Gamma $-convergenza}(Italian), Boll. Un. Mat. Ital. B(5) 14 (1977), no. 1, 285-299.

\bibitem{Modica} L. Modica, \textit{$ \Gamma $-convergence to minimal surfaces problem and global solutions of $ \Delta u = 2(u^3 - u) $}, Proceedings of the International Meeting on Recent Methods in Nonlinear Analysis (Rome, 1978), pp.223-244, Pitagora, Bologna, 1979.

\bibitem{Modica2} L. Modica, \textit{Gradient theory of phase transitions with boundary contact energy}, Annales de l'I.H.P. Analyse non linéaire, Vol 4, no. 5, pp. 487-512 (1987).

\bibitem{Morgan} Frank Morgan, \textit{$ (M, \varepsilon , \delta) $-Minimal Curve Regularity}, Proceedings of the American Mathematical Society, Vol. 120, Number 3, March 1994

\bibitem{Morgan2} Frank Morgan, \textit{Soap Bubbles in $ \mathbb{R}^2 $ and in surfaces} , Pacific Journal of Mathematics, Vol 165, No 2 (1994)

\bibitem{ORS} N.C. Owen, J. Rubinstein and P. Sternberg, \textit{Minimizers and gradient flows for singular perturbated bi-stable potentials with a Dirichlet condition}, Proc. R. Soc. Lond. A 429, 505-532 (1990)

\bibitem{SV} O. Savin and E. Valdinoci, \textit{$ \Gamma $-convergence for nonlocal phase transitions}, Annales de l'Institut Henri Poincaré C, Analyse non linéaire
Volume 29, Issue 4, Pages 479-500 (2012).

\bibitem{S} P. Sternberg, \textit{The effect of a singular perturbation on nonconvex variational problems}, Arch. Rational Mech. Anal. 101 (1988), no. 2, 209-260.

\bibitem{Taylor} J.E. Taylor, \textit{The structure of singularities in soap-bubble-like and soap-film-like minimal surfaces}, Ann. Math. 103, 489-539 (1976)

\bibitem{White} White, B., \textit{Existence of least energy configurations of immiscible fluids} , Geom. Anal. \textbf{6}, 151-161 (1996)

\bibitem{WhiteNotes} White, B., \textit{Lecture Notes on flat chains and geometric measure theory}, Stanford 2012, (notes by Otis Chodosh).

\bibitem{ZunigaSternberg} Andres Zuniga and Peter Sternberg, \textit{On the heteroclinic connection problem for multi-well gradient systems}, Journal of Differential Equations,  Vol 261, Issue 7, pp. 3987-4007 (2016).

\end{thebibliography}
\end{document}